\theoremstyle{plain}
\newtheorem{theorem}{Theorem}
\newtheorem{corollary}{Corollary}
\newtheorem*{theorem A}{Theorem A}
\newtheorem*{theorem B}{Theorem B}
\newtheorem*{corollary A}{Corollary A}
\newtheorem*{corollary B}{Corollary B}
\theoremstyle{definition}
\newtheorem{rem}{Remark}
\newtheorem{example}{Example}
\newtheorem*{proof A}{Proof of Theorem A}
\newtheorem*{proof B}{Proof of Theorem B}
\newtheorem*{proof AA}{Proof of Corollary A}
\DeclareMathOperator*{\p}{\bf P}
\DeclareMathOperator*{\e}{\bf E}
\begin{document}

\title{A Borel-Cantelli lemma and its applications}
\maketitle \centerline{\small{NUNO LUZIA}}
\centerline{\emph{Universidade Federal do Rio de Janeiro, Brazil}}
\centerline{\emph{e-mail address}: nuno@im.ufrj.br} \vspace{24pt}
\centerline{\scriptsize ABSTRACT}
\text{}\\
\begin{footnotesize}
We give a version of the Borel-Cantelli lemma. 
As an application, we prove an \emph{almost sure local central limit theorem}. As another application, we prove a dynamical Borel-Cantelli lemma for systems with sufficiently fast decay of correlations with respect to Lipschitz observables. 
\end{footnotesize}
\text{}\\\\
\emph{Keywords:} Borel-Cantelli lemma;  almost sure local central limit theorem; decay of correlations
\text{}\\\\
\tableofcontents
\section{Introduction and statements}
The classical Borel-Cantelli lemmas are a powerful tool in Probability Theory and Dynamical Systems. Let $(\Omega, \mathcal{F}, \p)$ be a probability space and $(A_n)$ a sequence of measurable sets in $\mathcal{F}$. These lemmas say that (see \cite{7} for proofs): 
\begin{enumerate}
\item[(BC1)] If $\sum_{n=1}^\infty \p(A_n)<\infty$ then $\p(x\in A_n \text{ i.o.})=0$.
\item[(BC2)] If the sets $A_n$ are independent and $\sum_{n=1}^\infty \p(A_n)=\infty$ then $\p(x\in A_n \text{ i.o.})=1$.
\item[(BC3)] If the sets $A_n$ are pairwise independent and $\sum_{n=1}^\infty \p(A_n)=\infty$ then
 \[
    \frac{\sum_{i=1}^{n} 1_{A_i}}{\sum_{i=1}^{n} \p(A_i)} \to 1 \quad \text{a.s.}
 \]
\end{enumerate}
Here $1_{A_i}$ is the indicator function of the set $A_i$. Note that (BC3) implies (BC2), but the proof of (BC3) is more elaborated. 

\subsection{A Borel-Cantelli Lemma}

\begin{theorem}\label{cheb}
 Let $X_i$ be non-negative random variables and $S_n=\sum_{i=1}^{n} X_i$. If $\sup {\bf E}X_i< \infty$, ${\e}S_n\to\infty$ and there exists $\gamma>1$ such that
 \begin{equation}\label{rest}
      \mathrm{var}(S_n)= O\left(\frac{({\e}S_n)^2}{(\log {\e}S_n) (\log\log {\e}S_n)^\gamma}\right)
 \end{equation}
then      
\[      
      \frac{S_n}{{\e}S_n}\to 1  \quad \text{a.s.}
\]
\end{theorem}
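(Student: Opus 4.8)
The plan is to run the classical subsequence argument behind strong laws of large numbers, choosing the subsequence so that the Chebyshev bound becomes summable precisely because $\gamma>1$. Set $b_n=\mathbf{E}S_n$ and $M=\sup_i\mathbf{E}X_i<\infty$. Since the $X_i$ are non-negative, both $(S_n)$ and $(b_n)$ are non-decreasing, and by hypothesis $b_n\to\infty$. Fix $\rho>1$ and introduce the first-passage times $n_k=\min\{n:b_n\ge\rho^k\}$, which are finite and, for $k$ large, strictly increasing with $n_k\to\infty$. By minimality of $n_k$ one has $b_{n_k}<\rho^k+M$ (with the convention $b_0=0$), so $\rho^k\le b_{n_k}<\rho^k+M$; hence $b_{n_k}/\rho^k\to1$ and therefore $b_{n_{k+1}}/b_{n_k}\to\rho$.

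Next I would apply Chebyshev's inequality along this subsequence. For each $\epsilon>0$,
\[
  \mathbf{P}\!\left(\left|\frac{S_{n_k}}{b_{n_k}}-1\right|>\epsilon\right)\le\frac{\mathrm{var}(S_{n_k})}{\epsilon^{2}b_{n_k}^{2}}=O\!\left(\frac{1}{\epsilon^{2}(\log b_{n_k})(\log\log b_{n_k})^{\gamma}}\right)=O\!\left(\frac{1}{\epsilon^{2}\,k\,(\log k)^{\gamma}}\right),
\]
where the first equality uses \eqref{rest} and the second uses $\log b_{n_k}\ge k\log\rho$. Because $\gamma>1$, the series $\sum_{k}k^{-1}(\log k)^{-\gamma}$ converges, so the right-hand side is summable in $k$; applying (BC1) with $\epsilon=1/j$ and intersecting over $j\in\mathbb{N}$ gives $S_{n_k}/b_{n_k}\to1$ almost surely.

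Finally I would transfer this to the full sequence using monotonicity. For $n_k\le n\le n_{k+1}$ we have $S_{n_k}\le S_n\le S_{n_{k+1}}$ and $b_{n_k}\le b_n\le b_{n_{k+1}}$, hence
\[
  \frac{S_{n_k}}{b_{n_k}}\cdot\frac{b_{n_k}}{b_{n_{k+1}}}\le\frac{S_n}{b_n}\le\frac{S_{n_{k+1}}}{b_{n_{k+1}}}\cdot\frac{b_{n_{k+1}}}{b_{n_k}}.
\]
Letting $k\to\infty$ and using $S_{n_k}/b_{n_k}\to1$ together with $b_{n_{k+1}}/b_{n_k}\to\rho$ yields, almost surely, $\rho^{-1}\le\liminf_n S_n/b_n\le\limsup_n S_n/b_n\le\rho$. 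Intersecting the resulting full-measure events over $\rho=1+1/m$, $m\in\mathbb{N}$, forces $S_n/b_n\to1$ a.s., which is the claim.

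The one delicate point is the choice of subsequence. The naive Chebyshev bound $\mathrm{var}(S_n)/b_n^{2}$ is not summable in $n$ — the hypothesis only makes it decay like $1/(\log b_n)$ — so one is forced to thin out to a subsequence that is geometric in $b_n$; the exponent $\gamma>1$ is exactly what is needed to make $\sum_k k^{-1}(\log k)^{-\gamma}$ converge along that subsequence, and the assumption $\sup_i\mathbf{E}X_i<\infty$ is what keeps the first-passage times from overshooting, guaranteeing $b_{n_{k+1}}/b_{n_k}\to\rho$ (and not something larger) so that the monotonicity sandwich becomes tight in the limit $\rho\downarrow1$.
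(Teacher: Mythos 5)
Your proof is correct, and it follows the same skeleton as the paper's: Chebyshev's inequality, Borel--Cantelli along a subsequence on which $\e S_{n_k}$ grows roughly exponentially (so that the bound $(\log \e S_{n_k})^{-1}(\log\log \e S_{n_k})^{-\gamma}\lesssim k^{-1}(\log k)^{-\gamma}$ becomes summable precisely because $\gamma>1$), and then interpolation by monotonicity of $S_n$ and $\e S_n$. The one place you diverge is the choice of subsequence. You take the textbook geometric blocks $\e S_{n_k}\approx\rho^k$, which forces the sandwich to deliver only $\rho^{-1}\le\liminf S_n/\e S_n\le\limsup S_n/\e S_n\le\rho$ and obliges you to finish with a countable intersection over $\rho=1+1/m$. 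The paper instead takes the slightly denser, subexponential sequence $\e S_{n_k}\approx e^{k/(\log k)^{\theta}}$ with $0<\theta<\gamma-1$; consecutive ratios $\e S_{n_{k+1}}/\e S_{n_k}$ then tend to $1$, so the sandwich yields the limit $1$ in one step, at the price of a summand $k^{-1}(\log k)^{\theta-\gamma}$ (still summable since $\theta<\gamma-1$) and a short computation verifying that $e^{(k+1)/(\log(k+1))^{\theta}}/e^{k/(\log k)^{\theta}}\to1$. Both routes use $\sup_i\e X_i<\infty$ in the same way, namely to keep the first-passage times from overshooting so that $\e S_{n_k}$ tracks its target value. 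Your version is marginally more elementary; the paper's avoids the final diagonal over $\rho$. Either is a complete proof.
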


We see that Theorem \ref{cheb} implies (BC3), because when $A_i$ are pairwise independent sets, $X_i=1_{A_i}$ and ${\e}S_n\to\infty$ then $\mathrm{var}(S_n)=\sum_{i=1}^{n} \mathrm{var}(X_i)\le{\e}S_n$. 
A slight modification of Theorem \ref{cheb} also gives a version of the Strong Law of Large Numbers without assuming the random variables are pairwise independent.

\begin{corollary}\label{slln}
 Let $X_i$ be identically distributed random variables with ${\e}X_i=\mu$, ${\e}X_i^2<\infty$ and $S_n=\sum_{i=1}^{n} X_i$. If $X_i\ge -M$, for some constant $M>0$, and there exists $\gamma>1$ such that
\[
   \sum_{1\le i<j\le n} \Bigl({\e}(X_iX_j)-\mu^2\Bigr)= O\left(\frac{n^2}{(\log n) (\log\log n)^{\gamma}}\right).
\]
then      
\[      
      \frac{S_n}{n}\to \mu  \quad \text{a.s.}
\]
\end{corollary}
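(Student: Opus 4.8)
\emph{Proof proposal.} The plan is to deduce Corollary \ref{slln} directly from Theorem \ref{cheb} by translating the variables so that they become non-negative. Since $X_i\ge -M$ we have $\mu=\e X_i\ge -M$; if $\mu=-M$ then $X_i=-M$ almost surely and the conclusion is immediate, so from now on assume $c:=\mu+M>0$. Put $Y_i=X_i+M\ge 0$ and $T_n=\sum_{i=1}^n Y_i=S_n+nM$, so that $\e Y_i=c$, hence $\sup_i\e Y_i=c<\infty$ and $\e T_n=nc\to\infty$. Thus the first hypotheses of Theorem \ref{cheb} hold for the sequence $(Y_i)$.

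Next I would control $\mathrm{var}(T_n)$. Translation does not change variances, so $\mathrm{var}(T_n)=\mathrm{var}(S_n)$, and expanding $\e[S_n^2]$ and subtracting $(\e S_n)^2=n^2\mu^2$ gives
\[
\mathrm{var}(S_n)=n\,\mathrm{var}(X_1)+2\sum_{1\le i<j\le n}\bigl(\e(X_iX_j)-\mu^2\bigr).
\]
Here the diagonal term is $O(n)$ because $\e X_1^2<\infty$, while the off-diagonal term is $O\bigl(n^2/((\log n)(\log\log n)^\gamma)\bigr)$ by hypothesis; since $n=o\bigl(n^2/((\log n)(\log\log n)^\gamma)\bigr)$, the diagonal term is negligible and we obtain $\mathrm{var}(T_n)=O\bigl(n^2/((\log n)(\log\log n)^\gamma)\bigr)$.

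Then I would check that this is precisely condition \eqref{rest} written for $(Y_i)$. Because $\e T_n=cn$ we have $(\e T_n)^2=c^2n^2$, $\log\e T_n=\log n+O(1)$ and $\log\log\e T_n=\log\log n+o(1)$, whence
\[
\frac{(\e T_n)^2}{(\log\e T_n)(\log\log\e T_n)^\gamma}\asymp\frac{n^2}{(\log n)(\log\log n)^\gamma},
\]
so \eqref{rest} indeed holds for $(Y_i)$ with the same $\gamma>1$. Applying Theorem \ref{cheb} to $(Y_i)$ gives $T_n/\e T_n\to 1$ a.s., that is $(S_n+nM)/(cn)\to 1$ a.s., which rearranges to $S_n/n\to\mu$ a.s.

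The argument is essentially bookkeeping, so I do not expect a substantive obstacle: the mathematical content lies entirely in Theorem \ref{cheb}. The only points that need genuine care are the degenerate case $\mu=-M$ (treated at the outset by a triviality), and the verification that the diagonal $O(n)$ contribution to $\mathrm{var}(S_n)$ is dominated by the prescribed off-diagonal bound so that the translated variance estimate still has the required form.
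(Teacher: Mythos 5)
Your proposal is correct and follows the route the paper intends: the paper gives no explicit proof of Corollary \ref{slln}, remarking only that it follows from a ``slight modification'' of Theorem \ref{cheb}, and your shift by $M$ to obtain non-negative variables, the translation-invariance of the variance together with the observation that the diagonal term $n\,\mathrm{var}(X_1)=O(n)$ is dominated by the assumed off-diagonal bound, and the separate treatment of the degenerate case $\mu=-M$ supply exactly the omitted bookkeeping. No gaps.
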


\subsection{An almost sure local central limit theorem} 
Let $X_i$ be independent random variables such that each $X_i$ assume the values $+1$ and $-1$ with probabilities $1/2$ and $1/2$. Then $S_n=\sum_{i=1}^n X_i$ is the simple random walk on the line. 
It is well known that the sequence of random variables $1_{\{S_{i}=0\}}$ does not obey the law of large numbers. More precisely (see \cite{12}),
\[
    \limsup_{n\to\infty} \frac{1}{\sqrt{n \log \log n}} \sum_{i=1}^n  1_{\{S_{i}=0\}} = \sqrt{2} \quad\text{a.s.}
\]
and there exists a constant $0<\gamma_0<\infty$ such that
\[
    \liminf_{n\to\infty} \frac{\sqrt{\log \log n}}{\sqrt{n}} \sum_{i=1}^n  1_{\{S_{i}=0\}} = \gamma_0 \quad\text{a.s.} 
\]
It is then natural to ask if $1_{\{S_{n_i}=0\}}$ obeys the law of large numbers for some increasing sequence $n_i$ of even positive integers.

More generally, we consider i.i.d. random variables $X_i$ which are \emph{h-lattice valued}, i.e. $\sum_{k\in\mathbf{Z}} \p(X_i=kh+b)=1$, for some $h>0$ and $b\in\mathbf{R}$ (we assume $h$ with this property is maximal).
Let $S_n=\sum_{i=1}^n X_i$ and $a\in\mathbf{R}$. By abuse of notation, when we write $S_{n}=a\sqrt{n}$ we mean  $S_{n}=[(a\sqrt{n}-nb)/h]h+nb$.

\begin{theorem}\label{return1}
Let $X_i$ be i.i.d. h-lattice valued random variables with ${\e}X_i=0$, ${\e}X_i^2=\sigma^2>0$ and ${\e}|X_i|^3<\infty$, and $S_n=\sum_{i=1}^n X_i$. Let $n_i$ be an increasing sequence of positive integers and $a\in\mathbf{R}$. Then
 
\begin{enumerate}
\item[(a)] If  $\sum_{i=1}^\infty n_i^{-1/2}<\infty$ then $\p(S_{n_i}=a\sqrt{n_i} \text{ i.o.})=0$.\\
\item[(b)] If  there exist $A>0$ and $\gamma>1$ such that
\[ 
  n_{i+1}-n_i\ge A n_i^{1/2} \quad\text{and}\quad
  n_i\le A^{-1} i^2 (\log i) (\log \log i)^{-3} (\log \log \log i)^{-2\gamma} 
\]
for all $i$, then
\begin{equation}\label{quot}
 \frac{\sum_{i=1}^n 1_{\{S_{n_i}=a\sigma\sqrt{n_i}\}}}{\sum_{i=1}^n n_i^{-1/2}} \to \frac{h}{\sqrt{2\pi} \sigma}\, e^{-a^2/2} \quad\text{a.s.}
 \end{equation}
 
Let $\Delta_n^a$ be the quotient between the left and right hand sides of (\ref{quot}). Then, for every $N>0$ there exists $C>0$ such that, for every $\epsilon>0$,
\[
\sup_{a\in [-N,N]} \p\left(|\Delta_n^a-1|>\epsilon\right)\le C\epsilon^{-2}(\log \log n)^{-1}(\log\log\log n)^{-\gamma}.
\]
\end{enumerate}
\end{theorem}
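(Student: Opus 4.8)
The plan is to derive Theorem \ref{return1} from Theorem \ref{cheb} applied to the random variables $X_i = 1_{\{S_{n_i}=a\sigma\sqrt{n_i}\}}$, after rescaling. For part (a), I would simply invoke the classical local CLT (Gnedenko's theorem, available since ${\e}|X_i|^3<\infty$): it gives $\p(S_{n_i}=a\sqrt{n_i}) = \frac{h}{\sqrt{2\pi\sigma^2 n_i}}e^{-a^2/(2\sigma^2 n_i)}(1+o(1)) = O(n_i^{-1/2})$, so $\sum_i \p(S_{n_i}=a\sqrt{n_i})<\infty$ whenever $\sum_i n_i^{-1/2}<\infty$, and (BC1) finishes it. This part is routine.

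For part (b), set $X_i = 1_{\{S_{n_i}=a\sigma\sqrt{n_i}\}}$ and $S_n = \sum_{i=1}^n X_i$ (overloading notation). By the local CLT, $p_i := {\e}X_i = \p(S_{n_i}=a\sigma\sqrt{n_i}) \sim \frac{h}{\sqrt{2\pi}\sigma}e^{-a^2/2}\, n_i^{-1/2}$, so ${\e}S_n = \sum_{i=1}^n p_i \sim \frac{h}{\sqrt{2\pi}\sigma}e^{-a^2/2}\sum_{i=1}^n n_i^{-1/2}$; the growth condition $n_i \le A^{-1} i^2(\log i)(\log\log i)^{-3}(\log\log\log i)^{-2\gamma}$ forces $n_i^{-1/2} \gtrsim i^{-1}(\log i)^{-1/2}(\log\log i)^{3/2}(\log\log\log i)^\gamma$, whose partial sums diverge, so ${\e}S_n\to\infty$; moreover one gets a quantitative lower bound ${\e}S_n \gtrsim (\log n)(\log\log n)^{-1}(\log\log\log n)^{-\gamma}$ times a constant — this is exactly the reserve needed later. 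To apply Theorem \ref{cheb} I must check $\sup_i {\e}X_i<\infty$ (trivial, indicators) and the variance bound \eqref{rest}. Writing $\mathrm{var}(S_n) \le {\e}S_n + 2\sum_{1\le i<j\le n}\bigl(\p(S_{n_i}=a\sigma\sqrt{n_i},\,S_{n_j}=a\sigma\sqrt{n_j})-p_ip_j\bigr)$, I would control the joint probabilities: conditioning on $S_{n_i}$ and using the local CLT for the increment $S_{n_j}-S_{n_i}$ over the $n_j-n_i \ge A n_i^{1/2}$ steps (this is where the gap condition $n_{i+1}-n_i\ge An_i^{1/2}$ enters — it guarantees $n_j-n_i$ is large enough for the local CLT estimate to kick in with a uniform error), I expect $\p(S_{n_i}=\cdot,\,S_{n_j}=\cdot) = p_i\cdot\frac{h}{\sqrt{2\pi\sigma^2(n_j-n_i)}}(1+o(1))$, which when compared to $p_ip_j\asymp p_i n_j^{-1/2}$ and summed yields a covariance sum that is $O\bigl(({\e}S_n)^2/((\log {\e}S_n)(\log\log {\e}S_n)^\gamma)\bigr)$ — matching \eqref{rest} because ${\e}S_n \asymp \log n$ up to the correction factors, so $\log{\e}S_n \asymp \log\log n$ and $\log\log{\e}S_n\asymp\log\log\log n$. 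Then Theorem \ref{cheb} gives $S_n/{\e}S_n\to 1$ a.s., and dividing through by the known asymptotics of ${\e}S_n$ relative to $\sum n_i^{-1/2}$ gives \eqref{quot}.

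For the final quantitative statement, I would not re-run the a.s. argument but instead extract the $\p(|\Delta_n^a-1|>\epsilon)$ bound from the Chebyshev/Kronecker machinery inside the proof of Theorem \ref{cheb}: along a suitable subsequence $n_k$ (geometric in $\log$-scale) one has $\p(|S_{n_k}/{\e}S_{n_k}-1|>\epsilon)\le \epsilon^{-2}\mathrm{var}(S_{n_k})/({\e}S_{n_k})^2 = O(\epsilon^{-2}(\log{\e}S_{n_k})^{-1}(\log\log{\e}S_{n_k})^{-\gamma})$, and then a maximal-type inequality bridges between $n_k$ and general $n$. Translating $\log{\e}S_n\asymp\log\log n$, $\log\log{\e}S_n\asymp\log\log\log n$ gives the stated $C\epsilon^{-2}(\log\log n)^{-1}(\log\log\log n)^{-\gamma}$. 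Uniformity over $a\in[-N,N]$ comes from the fact that all the local-CLT error terms above are uniform on compact $a$-ranges (Gnedenko's estimate is uniform in the lattice point), and the constant $\frac{h}{\sqrt{2\pi}\sigma}e^{-a^2/2}$ stays bounded away from $0$ and $\infty$ on $[-N,N]$, so the implied constants can be chosen independent of $a$.

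The main obstacle I anticipate is the covariance estimate: getting a clean, \emph{uniform-in-$i,j$} bound on $\p(S_{n_i}=a\sigma\sqrt{n_i},\,S_{n_j}=a\sigma\sqrt{n_j}) - p_ip_j$ that is summable with the right order. The subtlety is that the local CLT error term $o(1)$ is not uniform down to the smallest gaps unless one uses the $n_j-n_i\ge An_i^{1/2}\to\infty$ condition carefully, and one must also handle the contribution of pairs where $S_{n_i}$ is conditioned to an atypical value $a\sigma\sqrt{n_i}$ (the conditional increment CLT still holds but the bookkeeping of constants, via a Berry–Esseen-type local bound using ${\e}|X_i|^3<\infty$, needs care). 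Once that estimate is in hand with the correct $O$-term, everything else is substitution into Theorem \ref{cheb} and its corollary-style quantitative byproduct.
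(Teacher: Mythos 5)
Your overall strategy is the same as the paper's: part (a) is local CLT plus (BC1), and part (b) is Theorem \ref{cheb} applied to the indicators $1_{E_i}$, $E_i=\{S_{n_i}=a\sigma\sqrt{n_i}\}$, with the local CLT with rates supplying ${\e}1_{E_i}\sim \frac{h}{\sqrt{2\pi}\sigma}e^{-a^2/2}n_i^{-1/2}$ and the Chebyshev bound (\ref{lemma}) supplying the quantitative statement (no maximal inequality is needed for that last part, by the way: (\ref{lemma}) holds for every $n$, the subsequence being needed only for the a.s.\ conclusion). However, what you explicitly defer as ``the main obstacle'' is in fact the entire content of the proof, and your sketch of it is not yet a proof. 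The point is that the covariance $\p(E_i\cap E_j)-p_ip_j$ is \emph{not} small term by term: by the Markov decomposition, $\p(E_i\cap E_j)=p_ip_j\,(n_j/(n_j-n_i))^{1/2}R$ with $R=e^{a^2\sqrt{n_i}/(\sqrt{n_j}+\sqrt{n_i})}(1+O((n_j-n_i)^{-1/2}))$, and for pairs with $n_j$ comparable to $n_i$ the factor $(n_j/(n_j-n_i))^{1/2}$ is large. The paper therefore splits the double sum at $n_j\gtrless\nu_n n_i$ with $\sqrt{\nu_n}=(\log{\e}\tilde S_n)(\log\log{\e}\tilde S_n)^\gamma$: the far pairs give a genuinely small correction, while the near pairs are bounded in absolute value by $C\sum_i p_i\sum_j(n_j-n_i)^{-1/2}$, controlled via $n_j-n_i\ge C(j^2-i^2)$ (from the gap hypothesis), a count of how many $j$ satisfy $n_j\le\nu_n n_i$, and a Cauchy--Schwarz step yielding $\sum_j(n_j-n_i)^{-1/2}\le C(\log\log n)^{1/2}(\log n)^{1/2}$. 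None of this appears in your proposal.

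Moreover, your quantitative input for closing that estimate is wrong. The upper bound on $n_i$ gives $n_i^{-1/2}\gtrsim i^{-1}(\log i)^{-1/2}(\log\log i)^{3/2}(\log\log\log i)^{\gamma}$, whose partial sums are of order $(\log n)^{1/2}(\log\log n)^{3/2}(\log\log\log n)^{\gamma}$, not your claimed $(\log n)(\log\log n)^{-1}(\log\log\log n)^{-\gamma}$. This is not a cosmetic slip: the near-pair contribution $C(\log n)^{1/2}(\log\log n)^{1/2}{\e}\tilde S_n$ is $O\bigl(({\e}\tilde S_n)^2/\sqrt{\nu_n}\bigr)$ precisely because ${\e}\tilde S_n\gtrsim(\log n)^{1/2}(\log\log n)^{3/2}(\log\log\log n)^{\gamma}$, which is where the exponents $3$ and $2\gamma$ in the hypothesis on $n_i$ come from. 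With your (incorrect) lower bound you could not verify condition (\ref{rest}), so the reduction to Theorem \ref{cheb} would not close. Finally, for $a\ne 0$ the factor $e^{a^2\sqrt{n_i}/(\sqrt{n_j}+\sqrt{n_i})}$ and the lattice rounding $a\sigma(\sqrt{n_j}-\sqrt{n_i})+O(1)$ (which needs $(\sqrt{n_j}-\sqrt{n_i})^{-1}$ uniformly bounded, again from the gap condition) must be tracked explicitly; your $(1+o(1))$ absorbs terms that are bounded away from $1$ on the near-diagonal.
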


\begin{rem}
Concerning the divergent case $\sum_{i=1}^\infty n_i^{-1/2}=\infty$ and $a=0$. In \cite{3} the authors prove that  if there exists an integer $m>0$ such that $n_{i+mj}\ge n_{i}+n_{j}$ for every $i,j$ then $\p(S_{n_i}=0 \text{ i.o.})=1$. In \cite{5} the same authors claim that if there exist $A>0$ such that $n_{i+1}-n_{i}>A n_{i}^{1/2}$ for every $i$ then $\p(S_{n_i}=0 \text{ i.o.})=1$, however it seems their proof is not correct (see p. 184, Equation (21)).
\end{rem}
Considering $n_i=i^2$ and the `change of variable' $k=i^2$ we get the following \emph{almost sure local central limit theorem}.
\begin{corollary}
With the same hypotheses of Theorem \ref{return1},
\[
       \frac{1}{\log n} \sum_{k=1}^{n} 1_{\{S_{k}=a\sigma\sqrt{k}\}} \frac{1}{\sqrt{k}}\to \frac{h}{\sqrt{2\pi}\sigma} \, e^{-a^2/2}\quad \text{a.s.} 
\]
\end{corollary}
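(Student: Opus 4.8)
The plan is to apply Theorem~\ref{cheb} to the non-negative random variables $Y_k:=k^{-1/2}1_{\{S_k=a\sigma\sqrt k\}}$, $k\ge 1$, so that $T_n:=\sum_{k=1}^nY_k$ is exactly the sum on the left of the corollary before dividing by $\log n$, and then to read off the result from $T_n/\e T_n\to 1$ a.s.\ together with $\e T_n\sim c\log n$, where $c:=\frac{h}{\sqrt{2\pi}\,\sigma}e^{-a^2/2}$. This is the ``weighted, non-lacunary'' counterpart of the proof of Theorem~\ref{return1}(b): there one takes indicators along a sparse sequence, here one takes all $k$ but weights by $k^{-1/2}$, which keeps $\e T_n$ growing only logarithmically --- the heuristic ``$k=i^2$'' being the statement that with $n_i=i^2$ one has $\sum_{i\le n}n_i^{-1/2}\sim\log n$.

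The analytic input is the local limit theorem for $h$-lattice i.i.d.\ sums, available under $\e|X_i|^3<\infty$ in the sharp form
\[
\p(S_m=z)=\frac{h}{\sqrt{2\pi m}\,\sigma}\,e^{-z^2/(2m\sigma^2)}+O(m^{-1}),
\qquad \sup_z\p(S_m=z)=O(m^{-1/2}),
\]
uniformly over lattice points $z$. Since $S_k=a\sigma\sqrt k$ denotes the lattice point nearest $a\sigma\sqrt k$, i.e.\ $a\sigma\sqrt k+O(1)$, this yields $\p(S_k=a\sigma\sqrt k)=\frac{c}{\sqrt k}(1+O(k^{-1/2}))$, hence $\e Y_k=\frac{c}{k}(1+O(k^{-1/2}))$; summing gives $\e T_n=c\log n+O(1)\to\infty$, and trivially $\sup_k\e Y_k\le 1<\infty$. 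Thus the only remaining hypothesis of Theorem~\ref{cheb} is the variance bound~(\ref{rest}); because $\e T_n\sim c\log n$ it amounts to $\mathrm{var}(T_n)=O\big((\log n)^2/[(\log\log n)(\log\log\log n)^\gamma]\big)$ for some $\gamma>1$.

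The variance estimate is where the work --- and the main obstacle --- lies. Write $\mathrm{var}(T_n)=\sum_{k\le n}\mathrm{var}(Y_k)+2\sum_{1\le j<k\le n}\mathrm{cov}(Y_j,Y_k)$; the diagonal is $O(1)$ since $\e Y_k^2=k^{-1}\p(S_k=a\sigma\sqrt k)=O(k^{-3/2})$. For $j<k$ I would use the Markov property of the walk to factor $\p(S_j=x_j,S_k=x_k)=\p(S_j=x_j)\,\p(S_{k-j}=x_k-x_j)$ (with $x_j,x_k$ the chosen lattice points, $x_k-x_j=a\sigma(\sqrt k-\sqrt j)+O(1)$) and substitute the local limit theorem into each factor, keeping the Gaussian weights. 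The crucial point is that the crude bound $\mathrm{cov}(Y_j,Y_k)\le\e[Y_jY_k]$ only gives $\mathrm{var}(T_n)=O((\log n)^2)$, which just misses (\ref{rest}); one must instead use the cancellation in $\mathrm{cov}=\e[Y_jY_k]-\e Y_j\,\e Y_k$. Setting $t=j/k$, the main terms combine to
\[
\mathrm{cov}(Y_j,Y_k)=\frac{h^2e^{-a^2}}{2\pi\sigma^2\,jk}\Big(e^{a^2\sqrt t/(1+\sqrt t)}(1-t)^{-1/2}-1\Big)+(\text{error}),
\]
and the bracketed function is nonnegative, is $O(\sqrt t)$ as $t\to0$ and $O((1-t)^{-1/2})$ as $t\to1$; splitting the $j$-sum at $j=k/2$ gives $O(k^{-1})$ in each regime, so $\sum_{j<k}\mathrm{cov}(Y_j,Y_k)=O(\log n)$. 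The error terms carry an extra factor $(k-j)^{-1}$ or $j^{-1}$ coming from the $O(m^{-1})$ remainder in the local limit theorem --- which is exactly where $\e|X_i|^3<\infty$ is used --- and sum to at most $O(\log n)$ as well. Hence $\mathrm{var}(T_n)=O(\log n)=o\big((\log n)^2/[(\log\log n)(\log\log\log n)^\gamma]\big)$ for every $\gamma>1$, so (\ref{rest}) holds with room to spare, Theorem~\ref{cheb} applies to give $T_n/\e T_n\to1$ a.s., and dividing by $\e T_n=c\log n+O(1)$ yields the corollary.
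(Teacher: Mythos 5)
Your argument is correct, but it takes a genuinely different route from the paper's. The paper obtains this corollary in one line from Theorem \ref{return1}(b): take $n_i=i^2$ (which satisfies the gap condition with $A=2$ and the growth condition for large $i$) and perform the ``change of variable'' $k=i^2$, so that $\sum_{i\le n}1_{\{S_{i^2}=a\sigma i\}}$ and $\sum_{i\le n}i^{-1}\sim\log n$ turn into the weighted sum and its normalizer. Strictly speaking, that reduction controls only the part of the sum indexed by perfect squares, whereas the corollary as written sums over \emph{all} $k$ with weight $k^{-1/2}$; your proof instead applies Theorem \ref{cheb} directly to $Y_k=k^{-1/2}1_{\{S_k=a\sigma\sqrt{k}\}}$ and therefore proves the statement as written. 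The price is redoing the second-moment estimate, and you do it correctly: the increment factorization $\p(E_j\cap E_k)=\p(S_j=x_j)\,\p(S_{k-j}=x_k-x_j)$, the local limit theorem with uniform $O(m^{-1})$ remainder (this is exactly where $\e|X_i|^3<\infty$ enters, as in the paper's (\ref{lclt1}), cf.\ \cite{8}), and the cancellation leaving the main covariance term $\frac{h^2e^{-a^2}}{2\pi\sigma^2 jk}\bigl(e^{a^2\sqrt{t}/(1+\sqrt{t})}(1-t)^{-1/2}-1\bigr)$ with $t=j/k$; your split at $j=k/2$ correctly gives $O(k^{-1})$ for the inner sum, hence $\mathrm{var}(T_n)=O(\log n)$, which beats the bound $(\log n)^2/((\log\log n)(\log\log\log n)^{\gamma})$ required by (\ref{rest}) with room to spare. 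What the paper's route buys is brevity and reuse of the harder Theorem \ref{return1}; what yours buys is a self-contained proof covering all $k$ and a much stronger variance bound (hence a better rate in probability). Two small points to make explicit if you write this up: for near-diagonal pairs with $k-j=O(1)$ the $O((k-j)^{-1})$ remainder is useless and you should fall back on $\sup_z\p(S_m=z)=O(m^{-1/2})$ (which you already state), and you should note that $x_k-x_j$ does lie on the lattice of $S_{k-j}$, i.e.\ in $(k-j)b+h\mathbf{Z}$, so the local limit theorem applies to it.
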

When $a=0$ this was proved in \cite{4} (see also \cite{6}). For $a\in\mathbf{R}$ this was proved, independently, in \cite{13}.

The version for random variables with density follows. 
\begin{theorem}\label{return3}
Let $X_i$ be i.i.d. random variables having density function whose Fourier tranform (or some positive integer power of it) is integrable, ${\e}X_i=0$, ${\e}X_i^2=\sigma^2>0$ and ${\e}|X_i|^3<\infty$, and $S_n=\sum_{i=1}^n X_i$. Let $a\in\mathbf{R}$ and $n_i$ be an increasing sequence of positive integers satisfying 
\begin{equation*}\label{seqexp}
      \frac{n_{i+1}}{n_i}\ge1+\frac{A(\log i)(\log\log i)^{\alpha}}{i}\quad \text{for all }i 
\end{equation*}
for some $A>0$ and $\alpha>2$. Then
\begin{equation}\label{densidade}
\frac{1}{n}\sum_{i=1}^n 1_{\{S_{n_i}=a\sigma\sqrt{n_i}\}} \to  \frac{1}{\sqrt{2\pi}}\, e^{-a^2/2} \quad\text{a.s.}
\end{equation}
For the special case $a=0$, (\ref{densidade}) holds with $n_i=i$. 

Let $\Delta_n^a$ be the quotient between the left and right hand sides of (\ref{densidade}). Then, for every $N>0$ there exists $C>0$ such that, for every $\epsilon>0$,
\[
\sup_{a\in [-N,N]} \p\left(|\Delta_n^a-1|>\epsilon\right)\le C \epsilon^{-2}(\log n)^{-1}(\log\log n)^{1-\alpha}.
\]
\end{theorem}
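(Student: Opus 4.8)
The plan is to deduce Theorem \ref{return3} from Theorem \ref{cheb} applied to the non-negative variables $X_i:=1_{\{S_{n_i}=a\sigma\sqrt{n_i}\}}$ and to $T_n:=\sum_{i=1}^n X_i$ (the ``$S_n$'' of Theorem \ref{cheb}), in the same way the divergent Borel--Cantelli lemma follows from pairwise independence; the one genuinely new ingredient is a covariance estimate good enough to enter (\ref{rest}). First I would record what the hypothesis on the density gives: since the Fourier transform of the density (or a positive power of it) is integrable and ${\e}|X_1|^3<\infty$, for large $n$ the walk $S_n$ has a density obeying the local limit theorem with an Edgeworth-type remainder, so that
\[
  {\e}X_i=\frac{1}{\sqrt{2\pi}}\,e^{-a^2/2}+O\!\bigl(n_i^{-1/2}\bigr)
\]
uniformly for $a\in[-N,N]$. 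The growth hypothesis forces $n_i$ to grow faster than any power of $i$; in particular $\sum_i n_i^{-1/2}<\infty$, whence ${\e}T_n=\bigl(\tfrac{1}{\sqrt{2\pi}}e^{-a^2/2}\bigr)n+O(1)$. Thus $\sup{\e}X_i<\infty$ and ${\e}T_n\to\infty$, and the only remaining point is the variance bound (\ref{rest}), which here amounts to $\mathrm{var}(T_n)=O\bigl(n^2(\log n)^{-1}(\log\log n)^{-\gamma}\bigr)$ for some $\gamma>1$. Granting it, Theorem \ref{cheb} yields $T_n/{\e}T_n\to1$ a.s., and then $\tfrac1n T_n=(T_n/{\e}T_n)({\e}T_n/n)\to\tfrac{1}{\sqrt{2\pi}}e^{-a^2/2}$ a.s., which is (\ref{densidade}).

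To estimate $\mathrm{var}(T_n)=\sum_{i\le n}\mathrm{var}(X_i)+2\sum_{1\le i<j\le n}\mathrm{cov}(X_i,X_j)$, note first that, since the $X_i$ are bounded, the diagonal sum is $O(n)$ and hence negligible against the target, so everything lies in the double sum. For $i<j$ I would write $S_{n_j}=S_{n_i}+(S_{n_j}-S_{n_i})$, observe that the increment is independent of $S_{n_i}$ and is a sum of $n_j-n_i$ i.i.d.\ copies of $X_1$, condition on $S_{n_i}$, and apply the local limit theorem both to $S_{n_i}$ and to the increment; after passing to the rescaled walks this yields, uniformly in $a\in[-N,N]$, a bound of the shape
\[
  \mathrm{cov}(X_i,X_j)=O\!\bigl((n_i/n_j)^{1/2}\bigr).
\]
The growth hypothesis now enters decisively: summing $\log(n_{k+1}/n_k)\ge\log\bigl(1+A(\log k)(\log\log k)^{\alpha}/k\bigr)$ gives $n_i/n_j\le\exp\bigl(-c\sum_{i\le k<j}(\log k)(\log\log k)^{\alpha}/k\bigr)$, so that for each fixed $i$ the tail $\sum_{j>i}\mathrm{cov}(X_i,X_j)$ is controlled by a geometric-type series of ratio comparable to $\exp\bigl(-c(\log i)(\log\log i)^{\alpha}/i\bigr)$ and is therefore $O\bigl(i(\log i)^{-1}(\log\log i)^{1-\alpha}\bigr)$; summing over $i\le n$,
\[
  \sum_{1\le i<j\le n}\mathrm{cov}(X_i,X_j)=O\!\left(\frac{n^{2}}{(\log n)(\log\log n)^{\alpha-1}}\right).
\]
Since $\alpha>2$ this is (\ref{rest}) with $\gamma=\alpha-1>1$, completing the verification. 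I expect this covariance--summation step to be the crux: carrying the exact power of $\log\log n$ through — without losing a factor in the passage $\log(1+x)\to x$, in comparing the increment's density at the relevant point with $(n_j-n_i)^{-1/2}$, or in the summation — is exactly what the assumption $\alpha>2$ is there to absorb.

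For the quantitative estimate, put $c_0=\tfrac{1}{\sqrt{2\pi}}e^{-a^2/2}$, so $\Delta_n^a=T_n/(nc_0)$; then ${\e}\Delta_n^a=1+O(1/n)$ and $\mathrm{var}(\Delta_n^a)=\mathrm{var}(T_n)/(nc_0)^2$, both uniformly for $a\in[-N,N]$ (using $c_0\ge\tfrac{1}{\sqrt{2\pi}}e^{-N^2/2}$ there). Chebyshev's inequality then gives, for $n$ large depending only on $\epsilon$ and $N$,
\[
  \p\bigl(|\Delta_n^a-1|>\epsilon\bigr)\le\frac{4\,\mathrm{var}(T_n)}{\epsilon^{2}(nc_0)^{2}}\le C\,\epsilon^{-2}(\log n)^{-1}(\log\log n)^{1-\alpha},
\]
with $C$ uniform over $a\in[-N,N]$ since the local limit expansion and the covariance bound used above are uniform there. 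Finally, the special case $a=0$, $n_i=i$ falls outside the growth hypothesis; I would handle it not through Theorem \ref{cheb} but by a direct analysis of the correlations of the $1_{\{S_i=0\}}$, in the spirit of the classical almost sure local limit theorem.
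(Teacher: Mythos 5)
Your main argument is correct and is essentially the paper's proof: apply Theorem \ref{cheb} to $\tilde X_i=1_{\{S_{n_i}=a\sigma\sqrt{n_i}\}}$, use the local limit theorem with rate $O(n^{-1/2})$ (available by the third moment and the integrability of a power of the characteristic function) to get ${\e}\tilde S_n\sim n(\sqrt{2\pi})^{-1}e^{-a^2/2}$, and control $\mathrm{cov}(\tilde X_i,\tilde X_j)$ through the factor $R=e^{a^2\sqrt{n_i}/(\sqrt{n_j}+\sqrt{n_i})}\bigl(1+O((n_j-n_i)^{-1/2})\bigr)$, which is exactly your $O((n_i/n_j)^{1/2})$ uniformly on $[-N,N]$. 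The only organizational difference is in the summation: the paper splits the pairs at $n_j\lessgtr\nu_n n_i$ with $\sqrt{\nu_n}=(\log{\e}\tilde S_n)(\log\log{\e}\tilde S_n)^{\alpha-1}$ and counts, via the growth hypothesis, how many $j$ fall in the near range, whereas you sum a ``geometric'' series for each fixed $i$. Your route works, but note the series is not truly geometric: the ratio $(n_j/n_{j+1})^{1/2}\approx 1-\tfrac{A}{2}(\log j)(\log\log j)^{\alpha}/j$ tends to $1$ as $j$ grows, so the bound $\exp(-c(\log i)(\log\log i)^{\alpha}/i)$ on the ratio only governs the block $i<j\le 2i$; you need to observe separately that for $j>2i$ the terms $(n_i/n_j)^{1/2}\le i^{-c(\log\log i)^{\alpha}}$ are superpolynomially small in $i$, so their total contribution is $o(1)$. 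With that, the per-$i$ tail is indeed $O\bigl(i(\log i)^{-1}(\log\log i)^{-\alpha}\bigr)$ and the double sum is $O\bigl(n^2(\log n)^{-1}(\log\log n)^{1-\alpha}\bigr)$, giving (\ref{rest}) with $\gamma=\alpha-1>1$; the Chebyshev step for the uniform rate is as in the paper.

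The one genuine gap is the special case $a=0$, $n_i=i$, which is part of the statement and which you defer to an unspecified ``direct analysis in the spirit of the classical almost sure local limit theorem.'' No new machinery is needed: when $a=0$ the exponential factor in $R$ disappears, so $\mathrm{cov}(\tilde X_i,\tilde X_j)=O\bigl((n_j-n_i)^{-1/2}\bigr)=O\bigl((j-i)^{-1/2}\bigr)$ for $n_i=i$, and
\[
\sum_{1\le i<j\le n}(j-i)^{-1/2}=O(n^{3/2}),
\]
which is far inside the bound required by (\ref{rest}) since ${\e}\tilde S_n\asymp n$. This is exactly how the paper disposes of that case, within the same application of Theorem \ref{cheb}; you should include it rather than appeal to a different method.
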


In above theorem we can consider sequences of type $n_i=[e^{A(\log i)^{2}(\log \log i)^\alpha}]$. In particular, considering the sequence
$n_i=2^i$ and the `change of variable' $k=2^i$ we get the following \emph{almost sure local central limit theorem}.

\begin{corollary}
With the same hypotheses of Theorem \ref{return3},
\[
      \frac{1}{\log n} \sum_{k=1}^{n} 1_{\{S_{k}=a\sigma\sqrt{k}\}} \frac{1}{{k}}\to \frac{1}{\sqrt{2\pi}} e^{-a^2/2} \quad\text{a.s.}
\]
\end{corollary}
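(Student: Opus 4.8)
The plan is to deduce this corollary from Theorem \ref{return3} by the standard ``change of variable'' trick applied to the exponential sequence $n_i = 2^i$. First I would verify that this sequence satisfies the hypothesis of Theorem \ref{return3}: since $n_{i+1}/n_i = 2 \ge 1 + A(\log i)(\log\log i)^\alpha / i$ for all sufficiently large $i$ (the right-hand side tends to $1$), the condition holds (possibly after discarding finitely many terms, which does not affect the almost-sure conclusion). Hence Theorem \ref{return3} gives
\[
\frac{1}{n}\sum_{i=1}^n 1_{\{S_{2^i}=a\sigma\sqrt{2^i}\}} \to \frac{1}{\sqrt{2\pi}}\,e^{-a^2/2}\quad\text{a.s.}
\]

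Next I would translate the sum over $i$ into a weighted sum over the dyadic values $k = 2^i$. The weight $1/k$ is chosen precisely so that $\sum_{k=2^i} 1/k = \sum_i 2^{-i}$ — wait, that does not match; rather the point is that for a number $n$ with $2^m \le n < 2^{m+1}$ one has $\log n \asymp m$, and the left-hand side of the corollary is
\[
\frac{1}{\log n}\sum_{\substack{1\le k\le n \\ k = 2^i}} 1_{\{S_k = a\sigma\sqrt{k}\}}\,\frac{1}{k},
\]
but actually the weight $1/k$ for $k=2^i$ equals $2^{-i}$, which is summable, so this cannot be the right reading. The correct interpretation is that the sum over \emph{all} $k \le n$ of $k^{-1}1_{\{S_k=a\sigma\sqrt k\}}$ is being compared to $\log n$; one shows the dominant contribution comes from the dyadic scales and that within each dyadic block $[2^i, 2^{i+1})$ the indicator $1_{\{S_k = a\sigma\sqrt k\}}$ is, up to controlled error, captured by its value at the endpoint $k = 2^i$. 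Concretely, I would write $\sum_{k=1}^n k^{-1} 1_{\{S_k=a\sigma\sqrt k\}} = \sum_{i : 2^i \le n} \sum_{k=2^i}^{2^{i+1}-1} k^{-1}1_{\{S_k = a\sigma\sqrt k\}}$ and use $k^{-1} \asymp 2^{-i}$ on each block together with the fact that $\sum_{k=2^i}^{2^{i+1}-1} 1_{\{S_k = a\sigma\sqrt k\}} = O(2^{i/2})$ almost surely (from local CLT estimates and Borel–Cantelli, or from the known $\limsup$ behaviour) to reduce the block sum to $2^{-i}\cdot(\text{something of order }1)$, matching $\sum_i 2^{-i}\cdot(\ldots)$; summation by parts against $1/\log n$ then yields the Cesàro-type average of the dyadic terms, which converges by the displayed consequence of Theorem \ref{return3}.

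The main obstacle I anticipate is making the reduction from the continuous weighted sum to the dyadic sub-sum rigorous: one must show that the ``off-dyadic'' contributions, i.e. the fluctuations of $\sum_{k=2^i}^{2^{i+1}-1}k^{-1}1_{\{S_k = a\sigma\sqrt k\}}$ around a multiple of $2^{-i}1_{\{S_{2^i} = a\sigma\sqrt{2^i}\}}$, average out to zero after division by $\log n \sim (\log 2)\, m$. This is exactly the kind of estimate that requires the quantitative bound supplied at the end of Theorem \ref{return3} — namely $\p(|\Delta_n^a - 1| > \epsilon) \le C\epsilon^{-2}(\log n)^{-1}(\log\log n)^{1-\alpha}$ — applied along the subsequence $n = 2^m$, where it gives a summable-in-$m$ tail (since $(\log 2^m)^{-1} \asymp m^{-1}$ and $(\log\log 2^m)^{1-\alpha} \asymp (\log m)^{1-\alpha}$ with $\alpha > 2$, so the product is summable in $m$), hence almost-sure convergence along $2^m$ by Borel–Cantelli; the passage from the subsequence $2^m$ to all $n$ is then immediate by monotonicity of the partial sums and the asymptotic $\log n \sim \log 2^{m}$ when $2^m \le n < 2^{m+1}$.
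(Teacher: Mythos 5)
The paper does not give a detailed proof of this corollary: it is presented as an immediate consequence of Theorem \ref{return3} applied to $n_i=2^i$ via the ``change of variable'' $k=2^i$. You correctly noticed that this does not literally typecheck (the weight $k^{-1}=2^{-i}$ on the dyadic terms is not the Ces\`aro weight $1/m$), but your repair has a genuine gap. The decisive error is the claim $\sum_{k=2^i}^{2^{i+1}-1} 1_{\{S_k=a\sigma\sqrt{k}\}}=O(2^{i/2})$ a.s.: in the setting of Theorem \ref{return3} the local limit theorem (\ref{lclt2}) gives $\p(S_k=a\sigma\sqrt{k})=\tfrac{1}{\sqrt{2\pi}}e^{-a^2/2}(1+O(k^{-1/2}))$, i.e.\ a probability of \emph{constant} order, so a dyadic block contains $\asymp 2^i$ hits in expectation; the $k^{-1/2}$ decay you are using belongs to the lattice case of Theorem \ref{return1}. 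Consequently the weighted block sum $\sum_{k\in[2^i,2^{i+1})}k^{-1}1_{\{S_k=a\sigma\sqrt{k}\}}$ concentrates near the deterministic value $(\log 2)\tfrac{1}{\sqrt{2\pi}}e^{-a^2/2}$ and is in no sense approximated by a multiple of the single Bernoulli $1_{\{S_{2^i}=a\sigma\sqrt{2^i}\}}$; the corollary is simply not a statement about the dyadic subsequence, and no amount of control on $\Delta_n^a$ (which bounds the subsequence average, already known to converge a.s.\ by the theorem itself) reaches the off-dyadic terms.

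What does work --- and what the author's terse remark is evidently gesturing at --- is to run the machinery of Theorem \ref{cheb} directly on the weighted sum $T_n=\sum_{k\le n}k^{-1}1_{E_k}$ with $E_k=\{S_k=a\sigma\sqrt{k}\}$ (equivalently, on the dyadic block sums, which is where the heuristic $k=2^i$ earns its keep). One has $\e T_n\sim \tfrac{1}{\sqrt{2\pi}}e^{-a^2/2}\log n$, and the correlation estimate from the proof of Theorem \ref{return3}, namely $\p(E_k\cap E_l)=\p(E_k)\p(E_l)R$ with $R-1=O\bigl(\sqrt{k/l}+(l-k)^{-1/2}\bigr)$ for $k<l$, yields
\[
\sum_{k<l\le n}\frac{1}{kl}\,\mathrm{cov}(1_{E_k},1_{E_l})=O\Bigl(\sum_{k<l\le n}\frac{1}{kl}\Bigl(\sqrt{k/l}+(l-k)^{-1/2}\Bigr)\Bigr)=O(\log n),
\]
so $\mathrm{var}(T_n)=O(\log n)$, which satisfies (\ref{rest}) with room to spare since $(\e T_n)^2/\bigl((\log \e T_n)(\log\log \e T_n)^{\gamma}\bigr)\asymp (\log n)^2/\mathrm{polyloglog}$. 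I would encourage you to redo the argument along these lines; the subsequence-plus-interpolation scheme you propose cannot be made to close.
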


\begin{rem}
Using the same techniques (and the Berry-Esseen Theorem) we can prove Theorem 3 with `$\le$' instead of `$=$' in (\ref{densidade}), for i.i.d. random variables $X_i$ with finite third moment, thus giving  a new proof of the \emph{almost sure central limit theorem} (for related results, see \cite{11} and references therein).
\end{rem}

\subsection{A dynamical Borel-Cantelli Lemma}
We want to consider the dynamical version of (BC3). Let $(X,d)$ be a metric space, $\mu$ be a Borel probability measure on $X$, and $T$ be a $\mu$-preserving transformation on $X$. Let $(B_n)$ be a sequence of measurable sets in $X$. In what conditions does
\[
 \text{(DBC)}\quad\quad\quad\quad \frac{\sum_{i=0}^{n-1} 1_{B_i}(T^ix)}{\sum_{i=0}^{n-1} \mu(B_i)} \to 1 
    \quad \text{for $\mu$ a.e. } x 
\]      
holds? We can easily find sufficient conditions for (DBC) to hold: 
\begin{itemize}
\item The sets $B_n$ are all equal to $B$, $\mu(B)>0$ and $\mu$ is ergodic.
\item  The sets $T^{-n}B_n$ are pairwise independent and $\sum \mu(B_n)=\infty$. 
\end{itemize}
The first one follows from Birkhoff ergodic theorem, and the second one by (BC3) since $1_{B_i}(T^ix)=1_{T^{-i}B_i}(x)$.  However, it is very unlikely for a dynamical system $(T,X,\mu)$ to have $T^{-n}B_n$ pairwise independent. A far more reasonable condition is to have some sufficiently fast decay of correlations.   
In this section we extend some results of \cite{10} where the same kind of problem is treated. For related results we refer the reader to \cite{10} and references therein.

We denote by $\|  \cdot \|_\mathrm{Lip}$ the usual Lipschitz norm.
We say that $(T, X, \mu, d)$ has \emph{polynomial decay of correlations} (for Lipschitz observables) if, for every Lipschitz functions $\varphi,\psi\colon X\to\mathbb{R}$, 
\begin{equation}\label{cor}
\left|  \int \varphi\circ T^n \,\psi\,d\mu - \int \varphi\,d\mu  \int \psi\,d\mu \right| \le c(n)  \|  \varphi \|_\mathrm{Lip}  \|  \psi \|_\mathrm{Lip} 
\end{equation}
where $c(n)\le C n^{-\alpha}$ for some constants $C>0$ and $\alpha>0$ (rate). 

We say that $(T, X, \mu, d)$ has \emph{$\beta$-exponential decay of correlations}, $\beta>0$, if  $c(n)\le C e^{-\alpha n^\beta}$, for some constants $C,\alpha>0$, in (\ref{cor}). For $\beta=1$ we get the usual definition of  exponential decay of correlations.
For $\beta<1$ this is also known as \emph{streched exponential decay of correlations}.

We will assume the following condition on the measure $\mu$. There exist $C>0$, $\delta>0$  ($\delta<2$) and $r_0>0$ such that for all $x_0\in X$, $0<r\le r_0$ and $0<\epsilon\le 1$,
\begin{equation*}
 \text{(A)}\quad\quad\quad  \mu(\{x : r< d(x,x_0)< r+\epsilon\}) < C \epsilon^\delta.
\end{equation*}
In what follows, if  we only consider nested balls $B_i=B(x_0,r_i)$ ($r_i\to 0$) centered at a given point $x_0$ then we only require (A) holds for the point $x_0$, in other words, $r\mapsto \mu(B(x_0,r))$ is H\"older continuous (with exponent $\delta(x_0)>0$).

\begin{example}\text{ }
\begin{enumerate}
\item
If $X$ is a compact manifold and $\mu$ is absolutely continuous with respect to Lebesgue measure with density in $L^{1+\alpha}$ for some $\alpha>0$, then $\mu$ satisfies (A). 
\item
If $X=[0,1]^2$ and  $\mu=\nu \times\mu_x$ where $\mu_x$  are absolutely continuous with respect to Lebesgue measure with densities uniformly bounded in $L^\infty$, then there exists $C>0$ such that the $\mu$ measure of any annulus of inner radius $r$ and width $\epsilon$ is bounded by $C \sqrt{\epsilon}$, and so $\mu$ satisfies (A). 
\item
If $X=[0,1]$ and $\mu$ is the usual measure supported on the middle-third Cantor set, then $F(x)=\mu([0,x])$ is the Devil's staircase which is H\"older continuous with exponent $\log 2/\log 3$, and so  $\mu$ satisfies (A).
\end{enumerate}
\end{example}

\begin{theorem}\label{teorprinc}
Suppose $\mu$ satisfies (A) and $B_i$ are balls such that $\sum_{i=0}^{n-1} \mu(B_i)\ge n^\beta (\log n)^\gamma$, for some $0<\beta<1$ and $\gamma>1+\frac{(2-\delta)\beta}{2+\delta}$. If $(T, X, \mu, d)$ has polynomial decay of correlations with rate $\alpha=  \frac{2/\delta+1}{\beta}-1$
then (DBC) is satisfyed.

Let $\Delta_n$ be the left hand side of (DBC). Then, for every $\epsilon>0$, $\rho>1$ and $0<\theta<\frac{\gamma(2+\delta)-1}{2\beta}+\frac{\delta}{2}-1$, there exist $C>0$ and $C_\theta>0$ ($C_\theta$ does not depend on $\epsilon,\rho$) such that
\[
 \mu( |\Delta_n -1|>\epsilon )\le C (\log n)^{-1}(\log \log n)^{-\rho}+C_\theta (\log n)^{-\theta}.
\]
\end{theorem}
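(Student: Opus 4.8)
The plan is to obtain (DBC) as a direct consequence of Theorem \ref{cheb} applied to the random variables $X_i=1_{B_i}\circ T^i$ on the probability space $(X,\mu)$. Since $T$ preserves $\mu$ we have $\e X_i=\mu(B_i)\le 1$, hence $\sup_i\e X_i<\infty$, and $\e S_n=\sum_{i=0}^{n-1}\mu(B_i)=:N_n\ge n^\beta(\log n)^\gamma\to\infty$. Everything therefore reduces to verifying the variance condition (\ref{rest}): that $\mathrm{var}(S_n)=O\bigl(N_n^2/((\log N_n)(\log\log N_n)^{\gamma'})\bigr)$ for some $\gamma'>1$. The probability estimate in the statement will then drop out of the same variance bound via Chebyshev's inequality, since $\mu(|\Delta_n-1|>\epsilon)=\mu(|S_n-N_n|>\epsilon N_n)\le\mathrm{var}(S_n)/(\epsilon^2N_n^2)$ and $\log N_n\gtrsim\log n$, $\log\log N_n\gtrsim\log\log n$.

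Write $\mathrm{var}(S_n)=\sum_{i=0}^{n-1}\mathrm{var}(X_i)+2\sum_{0\le i<j\le n-1}\mathrm{Cov}(X_i,X_j)$. The diagonal is at most $\sum_i\mu(B_i)=N_n$, and by $\mu$-invariance $\mathrm{Cov}(X_i,X_j)=\mathrm{Cov}(1_{B_i},1_{B_j}\circ T^{\,j-i})$. Indicators of balls are not Lipschitz, so for a ball $B=B(x_0,r)$ and a scale $\epsilon\in(0,1]$ I would fix Lipschitz functions $\psi^-\le 1_B\le\psi^+$, equal to $0$ or $1$ away from a boundary annulus of width $\epsilon$, with $\|\psi^\pm\|_\mathrm{Lip}\le 1+2/\epsilon$ and --- by hypothesis (A) applied to that annulus --- $\int(\psi^+-\psi^-)\,d\mu\le C\epsilon^\delta$. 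Sandwiching both indicators and feeding $\psi^\pm$ into the decay-of-correlations inequality (\ref{cor}) with $c(k)\le Ck^{-\alpha}$ gives, for $k=j-i\ge 1$,
\[
 |\mathrm{Cov}(1_{B_i},1_{B_j}\circ T^{\,k})|\le C\bigl(\epsilon^\delta(\mu(B_i)+\mu(B_j))+\epsilon^{2\delta}+k^{-\alpha}\epsilon^{-2}\bigr),
\]
whereas for small gaps one has the crude bound $|\mathrm{Cov}(X_i,X_j)|\le\sqrt{\mu(B_i)\mu(B_j)}\le\tfrac12(\mu(B_i)+\mu(B_j))$.

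Now split the off-diagonal sum at a cutoff $m=m(n)$. Pairs with $j-i\le m$ contribute at most $\tfrac12\sum_{k\le m}\sum_i(\mu(B_i)+\mu(B_{i+k}))\le mN_n$; pairs with $j-i>m$ are handled with the Lipschitz bound at a scale $\epsilon=\epsilon_k$ (allowed to depend on $k$), whose $i$-sum is $\lesssim N_n\sum_{k>m}\epsilon_k^\delta+n\sum_{k>m}(\epsilon_k^{2\delta}+k^{-\alpha}\epsilon_k^{-2})$. One then chooses $m$ and $\epsilon_k$ as suitable powers of $n$ with logarithmic corrections --- roughly $m\sim n^\beta(\log n)^{\gamma-1}(\log\log n)^{-\gamma'}$ and $\epsilon_k$ minimizing $N_n\epsilon_k^\delta+nk^{-\alpha}\epsilon_k^{-2}$ subject to $\epsilon_k\le 1$. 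The identity $\alpha=(2/\delta+1)/\beta-1$ is precisely what forces the $k^{-\alpha}\epsilon_k^{-2}$ contribution to stay below $N_n^2$, while $\gamma>1+\tfrac{(2-\delta)\beta}{2+\delta}$ leaves just enough slack (quantified by the admissible $\theta$) for the near-diagonal term $mN_n$ and the $\epsilon_k^\delta,\epsilon_k^{2\delta}$ terms. The resulting estimate has the shape $\mathrm{var}(S_n)\lesssim N_n^2(\log N_n)^{-1}(\log\log N_n)^{-\rho}+N_n^2(\log N_n)^{-\theta}$, which through Chebyshev yields the two-term bound for $\mu(|\Delta_n-1|>\epsilon)$ --- the first term coming from the near-diagonal part, the second from the far part --- and through Theorem \ref{cheb} yields $S_n/N_n\to1$ $\mu$-a.e., i.e. (DBC).

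The main obstacle is this last piece of bookkeeping: carrying all the exponents through the joint optimization over $m$ and $\epsilon_k$ and confirming that the stated conditions on $\alpha$ and $\gamma$ are exactly the ones that close the estimate, uniformly over the possibly wide range $n^\beta(\log n)^\gamma\le N_n\le n$ (which enters the target bound only through $N_n$, not through $n$), together with the minor nuisance of balls whose radius is comparable to or smaller than $\epsilon_k$, where hypothesis (A) must be used in the weaker pointwise form $\mu(B(x_0,r))\le Cr^\delta$ that it implies.
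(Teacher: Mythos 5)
Your overall strategy is the right one and is close in spirit to the paper's: reduce (DBC) to Theorem \ref{cheb}, replace indicators of balls by Lipschitz functions so that the decay of correlations can be invoked, control the approximation error by the annulus condition (A), and split the off-diagonal covariances at a cutoff $m(n)$, bounding near pairs trivially and far pairs by the correlation decay. However, as written the proposal has a genuine gap, and it is not only the deferred bookkeeping. You claim the variance estimate comes out with the shape $\mathrm{var}(S_n)\lesssim N_n^2(\log N_n)^{-1}(\log\log N_n)^{-\rho}+N_n^2(\log N_n)^{-\theta}$ and that this feeds both into Theorem \ref{cheb} and, via Chebyshev, into the two-term deviation bound. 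But the admissible range $0<\theta<\frac{\gamma(2+\delta)-1}{2\beta}+\frac{\delta}{2}-1$ includes (and for some $\beta,\delta,\gamma$ forces) values $\theta<1$, and a variance bound containing a term $N_n^2(\log N_n)^{-\theta}$ with $\theta<1$ does \emph{not} satisfy hypothesis (\ref{rest}) of Theorem \ref{cheb}, so the almost sure conclusion would not follow from your argument. The two terms in the stated deviation bound do not both originate in the variance: in the paper the $(\log n)^{-\theta}$ term is the measure of the event that the Lipschitz surrogate disagrees with the indicator at some time $i\ge n$. Concretely, the paper smooths each $1_{B_i}$ \emph{once}, at an index-dependent scale $(i(\log i)^{1+\theta})^{-1/\delta}$, so that by (A) the disagreement sets have measure $\le (i(\log i)^{1+\theta})^{-1}$; these are summable, (BC1) gives $f_i(x)=1_{B_i}(T^ix)$ eventually a.s. (handling the a.s. part), and the tail sum $\sum_{i\ge n}(i(\log i)^{1+\theta})^{-1}\le C_\theta(\log n)^{-\theta}$ produces the second term of the deviation bound, while the variance of the smoothed sum is bounded by the first term alone. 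The parameter $\theta$ then enters the variance only through the Lipschitz norms $(i(\log i)^{1+\theta})^{1/\delta}$, which is where its upper bound comes from.

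Your variant --- smoothing inside each covariance at a gap-dependent scale $\epsilon_k$ and keeping the indicator sum itself --- has no analogue of that disagreement event, so either your optimization actually yields the pure bound $N_n^2(\log N_n)^{-1}(\log\log N_n)^{-\rho}$ (in which case you must exhibit the choices of $m$ and $\epsilon_k$ that achieve it, absorbing the extra error terms $N_n\sum_k\epsilon_k^\delta+n\sum_k\epsilon_k^{2\delta}$ which have no counterpart in the paper's computation, and explain where $\theta$ and the second term of the stated bound come from), or it leaves a $(\log N_n)^{-\theta}$ remainder and Theorem \ref{cheb} is inapplicable. Since you explicitly defer exactly this verification (``the main obstacle is this last piece of bookkeeping''), the proposal does not yet constitute a proof: the entire content of the theorem is the claim that $\alpha=\frac{2/\delta+1}{\beta}-1$ and $\gamma>1+\frac{(2-\delta)\beta}{2+\delta}$ close the estimate, and that is the part left unchecked. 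I would suggest restructuring along the paper's lines (global smoothing plus (BC1)), which cleanly separates the variance estimate from the indicator-replacement error and makes the role of $\theta$ transparent.
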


\begin{corollary}\label{corol1}
Suppose $\mu$ satisfies (A) and $B_i$ are balls such that $\mu(B_i)\ge i^{-\beta} (\log i)^\gamma$, for some $0<\beta<1$ and $\gamma>1+\frac{(2-\delta)(1-\beta)}{2+\delta}$. If $(T, X, \mu, d)$ has polynomial decay of correlations with rate $\alpha=  \frac{2/\delta+\beta}{1-\beta}$ then (DBC) is satisfyed.
\end{corollary}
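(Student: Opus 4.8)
The plan is to obtain Corollary~\ref{corol1} as an immediate consequence of Theorem~\ref{teorprinc} under the substitution $\beta \mapsto 1-\beta$, once the hypothesis $\mu(B_i)\ge i^{-\beta}(\log i)^\gamma$ has been converted into a lower bound for the partial sums $\sum_{i=0}^{n-1}\mu(B_i)$.

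First I would estimate $\sum_{i=1}^{n} i^{-\beta}(\log i)^\gamma$. Since $0<\beta<1$, comparison with the integral $\int_{2}^{n} x^{-\beta}(\log x)^\gamma\,dx$ together with the fact that $x\mapsto(\log x)^\gamma$ is slowly varying gives
\[
\sum_{i=1}^{n} i^{-\beta}(\log i)^\gamma \sim \frac{1}{1-\beta}\, n^{1-\beta}(\log n)^\gamma \qquad (n\to\infty),
\]
hence $\sum_{i=0}^{n-1}\mu(B_i)\ge \frac{1}{1-\beta}\,n^{1-\beta}(\log n)^\gamma$ for all large $n$ (the finitely many small indices are irrelevant for (DBC), which is an asymptotic statement). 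Because the hypothesis on $\gamma$ is a \emph{strict} inequality, I can pick $\gamma'$ with $1+\frac{(2-\delta)(1-\beta)}{2+\delta}<\gamma'<\gamma$; then $\frac{1}{1-\beta}(\log n)^{\gamma-\gamma'}\to\infty$, so $\sum_{i=0}^{n-1}\mu(B_i)\ge n^{1-\beta}(\log n)^{\gamma'}$ for all $n$ large enough. This is precisely the standing hypothesis of Theorem~\ref{teorprinc} with the exponent of $n$ taken to be $\beta_0:=1-\beta\in(0,1)$ and the exponent of $\log n$ taken to be $\gamma'$, which satisfies $\gamma'>1+\frac{(2-\delta)\beta_0}{2+\delta}$.

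It then remains to check that the decay-of-correlations hypothesis matches. Theorem~\ref{teorprinc} applied with $\beta_0=1-\beta$ requires polynomial decay with rate
\[
\frac{2/\delta+1}{\beta_0}-1=\frac{2/\delta+1}{1-\beta}-1=\frac{2/\delta+\beta}{1-\beta},
\]
which is exactly the rate $\alpha$ assumed in Corollary~\ref{corol1}. Applying Theorem~\ref{teorprinc} therefore yields (DBC). I do not expect any genuine obstacle here: Theorem~\ref{teorprinc} was stated so that this "change of variables" goes through, and the only point needing a little care is absorbing the constant $\frac{1}{1-\beta}$ into the $\log$-power, which is possible precisely because the condition on $\gamma$ leaves room to spare.
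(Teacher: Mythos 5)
Your proposal is correct and is essentially the paper's own (implicit) derivation: the paper states Corollary~\ref{corol1} without a separate proof precisely because summing $\mu(B_i)\ge i^{-\beta}(\log i)^\gamma$ gives $\sum_{i=0}^{n-1}\mu(B_i)\gtrsim n^{1-\beta}(\log n)^\gamma$ and Theorem~\ref{teorprinc} applies with $\beta\mapsto 1-\beta$, exactly as you compute (and since $\tfrac{1}{1-\beta}\ge 1$ you do not even need to sacrifice the $\log$-power). No issues.
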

Corollary \ref{corol1} is stronger than Theorem 3.1 of \cite{10}.

\begin{theorem}\label{teorsec}
Suppose $\mu$ satisfies (A) and $B_i$ are balls such that $\sum_{i=0}^{n-1} \mu(B_i)\ge  (\log n)^\beta (\log\log n) (\log\log\log n)^\gamma$, for some $\beta>0$, $\gamma>1$. If $(T, X, \mu, d)$ has $\beta^{-1}$-exponential decay of correlations then (DBC) is satisfyed.

Let $\Delta_n$ be the left hand side of (DBC). Then, for every $\epsilon>0$, there exists $C>0$ such that
\[
 \mu( |\Delta_n -1|>\epsilon )\le C (\log \log n)^{-1}( \log \log \log n)^{-\gamma}.
\]
\end{theorem}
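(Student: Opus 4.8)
The plan is to deduce the whole statement from Theorem~\ref{cheb}, applied on the probability space $(X,\mu)$ to the random variables $X_i=1_{B_i}(T^i\cdot)$ with $S_n=\sum_{i=0}^{n-1}X_i$. Here $\mathbf{E}X_i=\mu(B_i)\le1$ and $\mathbf{E}S_n=\sum_{i=0}^{n-1}\mu(B_i)=:L$, which tends to infinity by hypothesis, so the non-negativity, $\sup\mathbf{E}X_i<\infty$ and $\mathbf{E}S_n\to\infty$ requirements of Theorem~\ref{cheb} hold trivially, and everything reduces to the variance estimate
\[
 \mathrm{var}(S_n)\le C_1\,(\log n)^\beta\,L \qquad (n\text{ large}).
\]
Granting this, put $L_0:=(\log n)^\beta(\log\log n)(\log\log\log n)^\gamma$, so $L\ge L_0$. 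Since $L\mapsto(\log L)(\log\log L)^\gamma/L$ is eventually decreasing and, for $n$ large, $\log L_0\le(\beta+1)\log\log n$ and $\log\log L_0\le2\log\log\log n$, one gets $(\log n)^\beta\le(\beta+1)2^\gamma\,L/((\log L)(\log\log L)^\gamma)$ for all such $L$; hence the displayed bound is exactly \eqref{rest} with the same $\gamma$, and Theorem~\ref{cheb} gives $\Delta_n=S_n/\mathbf{E}S_n\to1$ a.s., i.e.\ (DBC). The quantitative part is then immediate from Chebyshev's inequality and $L\ge L_0$: $\mu(|\Delta_n-1|>\epsilon)\le\mathrm{var}(S_n)/(\epsilon^2L^2)\le C_1(\log n)^\beta/(\epsilon^2L_0)=C_1\epsilon^{-2}(\log\log n)^{-1}(\log\log\log n)^{-\gamma}$.

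So it all comes down to the variance bound. Write $\mathrm{var}(S_n)=\sum_i\mathrm{var}(X_i)+2\sum_{i<j}\mathrm{Cov}(X_i,X_j)$; the diagonal is at most $\sum_i\mathbf{E}X_i^2=\sum_i\mu(B_i)=L$. For $j=i+k$ with $k\ge1$, $T$-invariance gives $\mathbf{E}(X_iX_{i+k})=\int1_{B_i}\,(1_{B_{i+k}}\!\circ T^k)\,d\mu$, and condition (A) supplies, for each $\epsilon\in(0,1]$, a Lipschitz function $\psi_{i,\epsilon}\ge1_{B_i}$ with $\|\psi_{i,\epsilon}\|_{\mathrm{Lip}}\le2/\epsilon$ and $\int\psi_{i,\epsilon}\,d\mu\le\mu(B_i)+C\epsilon^\delta$. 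Feeding the $\psi_{i,\epsilon}$ into the decay-of-correlations inequality \eqref{cor} yields, for any scale $\epsilon=\epsilon_k$,
\[
 \mathrm{Cov}(X_i,X_{i+k})\le C\epsilon_k^\delta\bigl(\mu(B_i)+\mu(B_{i+k})\bigr)+C^2\epsilon_k^{2\delta}+4\,c(k)\,\epsilon_k^{-2}.
\]

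I would now split the sum over $k\ge1$ at a cutoff $K:=\lceil C_0(\log n)^\beta\rceil$ with $C_0$ a large constant depending only on $\alpha,\delta,\beta$. For $1\le k\le K$ I discard the last estimate and use the trivial $\mathrm{Cov}(X_i,X_{i+k})\le\mathbf{E}(X_iX_{i+k})\le\mu(B_i)$, so this range contributes at most $KL$ to $\sum_{i<j}\mathrm{Cov}$. For $k>K$ I take $\epsilon_k=e^{-(\alpha/4)k^{1/\beta}}$, where $c(n)\le Ce^{-\alpha n^{1/\beta}}$ is the assumed $\beta^{-1}$-exponential rate, so that $c(k)\epsilon_k^{-2}\le Ce^{-(\alpha/2)k^{1/\beta}}$; since $1/\beta>0$, each of $\sum_{k>K}\epsilon_k^\delta$, $\sum_{k>K}\epsilon_k^{2\delta}$ and $\sum_{k>K}c(k)\epsilon_k^{-2}$ is $O\bigl((1+K)e^{-c'K^{1/\beta}}\bigr)$ for a suitable $c'>0$, and as $K^{1/\beta}\asymp\log n$ these are $O(n^{-2}(\log n)^\beta)$ once $C_0$ is large enough; bounding the number of pairs by $n$ and using $L\le n$, the total contribution of $k>K$ to $\sum_{i<j}\mathrm{Cov}$ is thus $O((\log n)^\beta/n)=o(1)$. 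Altogether $\mathrm{var}(S_n)\le L+2KL+o(1)\le C_1(\log n)^\beta L$ for $n$ large.

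The one genuinely delicate point is the choice of $K$. It must be small enough that $KL$ stays below $L^2/((\log L)(\log\log L)^\gamma)$ — after the iterated logarithms in \eqref{rest} collapse against those in the hypothesis on $\sum_{i=0}^{n-1}\mu(B_i)$ this forces $K\lesssim(\log n)^\beta$, and it is precisely the extra factor $(\log\log n)(\log\log\log n)^\gamma$ in that hypothesis that creates the room — yet large enough that the far-range error terms $n\epsilon_k^{2\delta}$ and $n\,c(k)\epsilon_k^{-2}$ become negligible, which needs $e^{-c'K^{1/\beta}}\lesssim n^{-2}$, i.e.\ $K\gtrsim(\log n)^\beta$. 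The compatibility of these two demands is exactly why $\beta^{-1}$-exponential decay is the rate that pairs with this growth of $\sum_{i=0}^{n-1}\mu(B_i)$; everything else is routine bookkeeping with the Lipschitz approximation furnished by (A).
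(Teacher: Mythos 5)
Your proof is correct and follows essentially the same route as the paper: both reduce to Theorem \ref{cheb} via a near/far splitting of the covariances at a cutoff of order $(\log n)^\beta$, bound the near range trivially by $\sum_k\sum_i\mu(B_i)$, kill the far range using the $\beta^{-1}$-exponential decay applied to Lipschitz functions supplied by (A), and convert $(\log n)^\beta\,\mathbf{E}S_n$ into $(\mathbf{E}S_n)^2/((\log\mathbf{E}S_n)(\log\log\mathbf{E}S_n)^\gamma)$ by the same monotonicity-plus-evaluation-at-the-threshold argument (the paper's inequality (\ref{foda})). The only real difference is in where the smoothing enters, and it is mildly to your advantage: the paper replaces the indicators by Lipschitz observables $\tilde f_i\circ T^i$ at a fixed $i$-dependent scale and pays an extra Borel--Cantelli correction term in the deviation estimate, whereas you keep the indicators as the random variables and majorize only inside each covariance at a gap-dependent scale $\epsilon_k$, so the stated bound $C\epsilon^{-2}(\log\log n)^{-1}(\log\log\log n)^{-\gamma}$ comes out of Chebyshev directly.
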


\begin{corollary}\label{cor2}
Suppose $\mu$ satisfies (A) and $B_i$ are balls such that $\mu(B_i)\ge i^{-1} (\log i)^{\beta-1} (\log\log i) (\log\log\log i)^\gamma$, for some $\beta>0$, $\gamma>1$. If   $(T, X, \mu, d)$ has $\beta^{-1}$-exponential decay of correlations then (DBC) is satisfyed.
\end{corollary}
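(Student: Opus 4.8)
The plan is to deduce Corollary \ref{cor2} directly from Theorem \ref{teorsec} by an ``integral comparison'' of the partial sums, exactly in the spirit of how Corollary \ref{corol1} follows from Theorem \ref{teorprinc}. Since $\mu$ satisfies (A) in both statements and $(T,X,\mu,d)$ has $\beta^{-1}$-exponential decay of correlations in both, and since the $B_i$ are balls in both, the only thing that needs proof is that the pointwise lower bound
\[
 \mu(B_i)\ \ge\ i^{-1}(\log i)^{\beta-1}(\log\log i)(\log\log\log i)^\gamma
\]
forces the partial-sum lower bound $\sum_{i=0}^{n-1}\mu(B_i)\ge (\log n)^{\beta}(\log\log n)(\log\log\log n)^{\gamma'}$ required to invoke Theorem \ref{teorsec} (for a suitable $\gamma'>1$).

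First I would estimate the partial sums. Set $f(x)=x^{-1}(\log x)^{\beta-1}(\log\log x)(\log\log\log x)^\gamma$; since $\frac{d}{dx}\log f(x)=-x^{-1}+(\text{lower order})$, the function $f$ is eventually positive and decreasing, so for all large $n$ one has $\sum_{i=0}^{n-1}\mu(B_i)\ge\sum_{i=i_0}^{n-1}f(i)\ge\int_{i_0}^{n}f(x)\,dx$. The substitution $u=\log x$ gives $dx=e^u\,du$ and $f(x)=e^{-u}u^{\beta-1}(\log u)(\log\log u)^\gamma$, so
\[
 \int_{i_0}^{n}f(x)\,dx\ =\ \int_{\log i_0}^{\log n} u^{\beta-1}(\log u)(\log\log u)^\gamma\,du .
\]
Differentiating $g(u)=u^{\beta}(\log u)(\log\log u)^\gamma$ yields $g'(u)=\beta\,u^{\beta-1}(\log u)(\log\log u)^\gamma$ plus terms smaller by a factor of order $(\log u)^{-1}$, hence the last integral is asymptotic to $\beta^{-1}(\log n)^{\beta}(\log\log n)(\log\log\log n)^\gamma$. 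Consequently there is a constant $c>0$ with $\sum_{i=0}^{n-1}\mu(B_i)\ge c\,(\log n)^{\beta}(\log\log n)(\log\log\log n)^\gamma$ for all $n$ large.

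Finally I would absorb the constant $c$: fix $\gamma'$ with $1<\gamma'<\gamma$; then $c\,(\log\log\log n)^\gamma\ge(\log\log\log n)^{\gamma'}$ once $n$ is large, so $\sum_{i=0}^{n-1}\mu(B_i)\ge(\log n)^{\beta}(\log\log n)(\log\log\log n)^{\gamma'}$ for all sufficiently large $n$. This is precisely the hypothesis of Theorem \ref{teorsec} with $\gamma'$ in place of $\gamma$; the finitely many small $n$ for which the iterated logarithms are awkward are irrelevant, since (DBC) is an asymptotic statement. Theorem \ref{teorsec} then gives (DBC) (and, as a bonus, the rate $\mu(|\Delta_n-1|>\epsilon)\le C(\log\log n)^{-1}(\log\log\log n)^{-\gamma'}$). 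The argument has no real obstacle; the only point needing a touch of care is verifying that the extra terms arising when differentiating $u^{\beta}(\log u)(\log\log u)^\gamma$ are genuinely negligible against $u^{\beta-1}(\log u)(\log\log u)^\gamma$, so that $\beta^{-1}g(\log n)$ is indeed the leading term of the antiderivative — everything else is bookkeeping with constants and the ``for $n$ large'' caveat.
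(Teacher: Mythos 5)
Your proposal is correct and is exactly the deduction the paper intends (it leaves the proof of Corollary \ref{cor2} implicit): sum the pointwise lower bound via the substitution $u=\log x$ to get $\sum_{i=0}^{n-1}\mu(B_i)\gtrsim \beta^{-1}(\log n)^{\beta}(\log\log n)(\log\log\log n)^{\gamma}$, absorb the constant into a smaller exponent $\gamma'\in(1,\gamma)$, and invoke Theorem \ref{teorsec}. The integral asymptotics and the handling of the "for $n$ large" issue are all in order.
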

Corollary \ref{cor2} with $\beta=1$ is stronger than Theorem 4.1 of \cite{10}.

Many `nonuniformly hyperbolic dynamical systems' exhibit some kind of decay of correlations, see \cite{10} for examples  with polynomial and exponential decay of correlations. The `Viana-like maps'  are a class of dynamical systems which possess streched exponential decay of correlations (but no exponential decay of correlations),
see \cite{1}, so our results also give good recurrence results for these systems.
\subsection{Quantitative recurrence}

As before, let $(T, X, \mu)$ be a measure preserving transformation of a probability space $X$ which is also endowed with a metric $d$. For $\alpha>0$, we denote by $\mathcal{H}^\alpha$ the $\alpha$-Hausdorff measure of $(X, d)$. One of the most beautiful results on the recurrence of dynamical systems is the following.

\begin{theorem}\emph{(Boshernitzan \cite{2})}
If, for some $\alpha>0$, $\mathcal{H}^\alpha$ is $\sigma$-finite on $X$, then
\[
      \liminf_{n\to\infty} n^{\frac{1}{\alpha}}\, d(T^n(x),x)<\infty \quad \text{for $\mu$ a.e. } x \in X.
\]
If, moreover, $\mathcal{H}^\alpha(X)=0$, then
\[
      \liminf_{n\to\infty} n^{\frac{1}{\alpha}}\, d(T^n(x),x)=0 \quad \text{for $\mu$ a.e. } x \in X.
\]
\end{theorem}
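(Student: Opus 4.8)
I would argue by contradiction and reduce the statement to a soft fact about a single positive-measure set on which recurrence fails \emph{uniformly}. Suppose the first conclusion fails, so $m_0:=\mu(\{x:\liminf_{n}n^{1/\alpha}d(T^nx,x)=\infty\})>0$. For every $c>0$ the set $\{x:d(T^nx,x)\ge c\,n^{-1/\alpha}\text{ for all large }n\}$ has $\mu$-measure $\ge m_0$, so we may fix $c$ (large) and an integer $N_0$ with $S:=\{x:d(T^nx,x)\ge c\,n^{-1/\alpha}\ \forall\,n\ge N_0\}$ of measure $\ge m_0/2$. The finitely many exponents $n<N_0$ are harmless: for each such $n$ the sets $\{x:d(T^nx,x)<\delta\}$ decrease as $\delta\downarrow0$ to the set of points fixed by $T^n$, which is disjoint from $S$ (on it the liminf is $0$), so after removing a part of arbitrarily small measure we obtain $A\subseteq S$ with $\mu(A)>0$ and $d(T^nx,x)\ge\epsilon\,n^{-1/\alpha}$ for \emph{all} $n\ge1$, for some $\epsilon>0$. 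For the ``moreover'' statement one runs the same reduction starting from $\mu(\{x:\liminf_n n^{1/\alpha}d(T^nx,x)>0\})>0$.

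Next I would shrink $X$ to a tame piece. As $\mathcal H^\alpha$ is $\sigma$-finite, write $X=\bigcup_iY_i$ with $\mathcal H^\alpha(Y_i)<\infty$ and $Y_i\uparrow X$, so $\mu(Y_i)\to1$; as $\mu$ is a Borel probability measure it is tight, so some compact $K$ has $\mu(K)$ close to $1$. Intersecting with $A$ gives a set $E$ with $\mu(E)>0$, with $E$ totally bounded (it lies in a compact set) and $\mathcal H^\alpha(E)<\infty$ — in fact $\mathcal H^\alpha(E)=0$ in the ``moreover'' case. Since $\mu(E)>0$, Birkhoff's theorem furnishes $\theta>0$ and a positive-measure set of points $x$ with $\#\{0\le i<k:T^ix\in E\}\ge\theta k$ for all large $k$.

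The core is a weighted covering count. Fix such an $x$ and a large $k$. By total boundedness, cover $E$ by finitely many sets $U_1,\dots,U_m$ of diameter $\le r$ with $\sum_\ell(\operatorname{diam}U_\ell)^\alpha\le\mathcal H^\alpha(E)+1$ (which may be taken $<\eta$, with $\eta$ arbitrarily small, when $\mathcal H^\alpha(E)=0$). If $i<j$ are both $<k$ with $T^ix,T^jx\in E\cap U_\ell$, then, using $T^ix\in E\subseteq A$,
\[
\operatorname{diam}U_\ell\ \ge\ d(T^jx,T^ix)=d\bigl(T^{\,j-i}(T^ix),T^ix\bigr)\ \ge\ \epsilon\,(j-i)^{-1/\alpha},
\]
so the times $i<k$ with $T^ix\in E\cap U_\ell$ are $(\epsilon/\operatorname{diam}U_\ell)^\alpha$-separated integers, hence at most $1+k\,\epsilon^{-\alpha}(\operatorname{diam}U_\ell)^\alpha$ in number. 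Summing over $\ell$,
\[
\theta k\ \le\ \#\{i<k:T^ix\in E\}\ \le\ \sum_{\ell=1}^{m}\Bigl(1+k\,\epsilon^{-\alpha}(\operatorname{diam}U_\ell)^\alpha\Bigr)\ \le\ m+k\,\epsilon^{-\alpha}\bigl(\mathcal H^\alpha(E)+1\bigr).
\]
When $\mathcal H^\alpha(E)=0$ one replaces $\mathcal H^\alpha(E)+1$ by $\eta$, chosen with $\eta\,\epsilon^{-\alpha}<\theta/2$, so that $\tfrac{\theta}{2}k\le m$ for all large $k$ — absurd. In the $\sigma$-finite case one needs instead that $\epsilon$ be large enough that $\epsilon^{-\alpha}(\mathcal H^\alpha(E)+1)<\theta/2$, which forces the same contradiction.

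The main obstacle I anticipate is the first step in the $\sigma$-finite case: producing, from ``$\liminf=\infty$ on a positive-measure set'', a uniform bound $d(T^nx,x)\ge\epsilon\,n^{-1/\alpha}$ valid for \emph{all} $n\ge1$ with $\epsilon$ large enough to dominate the fixed number $\mathcal H^\alpha(E)+1$; the truncation level $N_0$ and the constant $\epsilon$ are coupled, so this needs a careful choice and is the technical point separating the $\sigma$-finite case from the cleaner case $\mathcal H^\alpha(X)=0$. The remaining ingredients — tightness, measurability of the Birkhoff average, and the covering count itself — are routine once the uniform recurrence is in hand.
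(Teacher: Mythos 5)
The paper offers no proof of this statement: it is quoted from Boshernitzan \cite{2} purely as background for Section 1.4, so your attempt can only be judged on its own merits, not against an internal argument.

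Your proof of the second assertion (the case $\mathcal{H}^\alpha(X)=0$) is essentially sound. There the contradiction only requires \emph{some} $\epsilon>0$ with $d(T^nx,x)\ge\epsilon n^{-1/\alpha}$ for all $n\ge1$ on a set of positive measure, and since $\mathcal{H}^\alpha(E)=0$ you are free to choose the cover afterwards with total $\alpha$-content $\eta<\theta\epsilon^\alpha/2$. The removal of near-periodic points for $n<N_0$, the Birkhoff density argument, and the separation count are all correct, modulo routine regularity (take $E$ closed and the $U_\ell$ open so that a finite subcover exists; note that $\sigma$-finiteness of $\mathcal{H}^\alpha$ forces $X$ to be separable, which is what actually supplies totally bounded sets of nearly full measure).

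The first assertion is where the argument genuinely breaks, and precisely at the point you flag. Your contradiction needs $\epsilon^{-\alpha}(\mathcal{H}^\alpha(E)+1)<\theta/2$, i.e.\ $\epsilon$ large, but the construction only delivers $\epsilon=\min\{c,\min_{n<N_0(c)}\delta_n n^{1/\alpha}\}$: raising $c$ raises $N_0(c)$ at a rate you do not control, and the $\delta_n$ are uncontrollably small. Even the natural repair --- grouping the visit times to $E\cap U_\ell$ into clusters of span $<N_0$ (each of size $\le N_0$, consecutive clusters at distance $\ge(c/\operatorname{diam}U_\ell)^\alpha$) --- only yields $\theta k\le N_0 m+N_0\,k\,c^{-\alpha}(\mathcal{H}^\alpha(E)+1)$, which demands $c^\alpha\gtrsim N_0(c)$. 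This fails, for instance, if $d(T^nx,x)\asymp n^{-1/(2\alpha)}$ on the bad set, where $N_0(c)\asymp c^{2\alpha}$: the theorem excludes such a configuration, but your inequalities cannot. The missing idea is a global, averaged estimate rather than a per-ball count: one bounds the sum of $\alpha$-th powers of nearest-neighbour distances along an orbit segment lying in a set of finite $\mathcal{H}^\alpha$, integrates over $X$ using invariance of $\mu$ to obtain an estimate roughly of the form $\liminf_k k\int\min\{R_k(x)^\alpha,1\}\,d\mu(x)\le C\,\mathcal{H}^\alpha(X)$ with $R_k(x)=\min_{1\le n\le k}d(T^nx,x)$, and then applies Fatou to get $\liminf_k k^{1/\alpha}R_k(x)<\infty$ almost everywhere (the bound degenerating to $0$ when $\mathcal{H}^\alpha(X)=0$, which recovers your second case). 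Your covering count discards exactly the cross-ball information that this integrated estimate retains.
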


Also a very nice result in this direction is as follows. Given $x_0\in X$, let $\bar{d}_\mu(x_0)$ be the \emph{upper pointwise dimension of $\mu$} at $x_0$ defined by
\[
  \bar{d}_\mu(x_0) =\limsup_{r\to0} \frac{\log \mu(B(x_0, r))}{\log r}       
\]
where $B(x_0, r)$ is the ball centered at $x_0$ of radius $r$. We also say that $(T, X, \mu, d)$ has \emph{superpolynomial decay of correlations} (for Lipschitz observables) if it has polynomial decay of correlations with rate $\alpha$ for every $\alpha>0$. 

\begin{theorem}\notag\emph{(Galatolo \cite{9})} If $(T, X, \mu, d)$ has superpolynomial decay of correlations, then, for every $x_0\in X$ and $\alpha> \bar{d}_\mu(x_0)$,
\[
      \liminf_{n\to\infty} n^{\frac{1}{\alpha}}\, d(T^n(x),x_0)=0\quad \text{for $\mu$ a.e. } x \in X.
\]  
\end{theorem}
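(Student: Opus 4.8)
The plan is to derive this from the Borel--Cantelli lemma Theorem~\ref{cheb} by a shrinking--target argument; the only subtlety is that, since $\mu$ is not assumed to satisfy any regularity condition such as (A), the targets must be chosen carefully.

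\textbf{Step 1 (reduction).} Fix $x_0\in X$ and $\alpha>\bar d_\mu(x_0)$; in particular $\mu(B(x_0,r))>0$ for every $r>0$. Choose $\alpha'$ with $\bar d_\mu(x_0)<\alpha'<\alpha$. By the definition of the upper pointwise dimension there is $r_1\in(0,1)$ such that $\mu(B(x_0,r))\ge r^{\alpha'}$ for all $0<r\le r_1$. It is enough to show that for each fixed $\epsilon>0$ one has $\liminf_{n}n^{1/\alpha}d(T^nx,x_0)\le\epsilon$ for $\mu$-a.e.\ $x$, and then intersect over $\epsilon\in\{1/m:m\ge1\}$. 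For this it suffices to produce an increasing sequence $n_k\to\infty$ and balls $B_k=B(x_0,r_k)$ with $r_k\le 2\epsilon\, n_k^{-1/\alpha}$ such that $T^{n_k}x\in B_k$ for infinitely many $k$, for $\mu$-a.e.\ $x$.

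\textbf{Step 2 (choice of the targets).} I would take $n_k=[k^p]$ with $1<p<\alpha/\alpha'$ (possible because $\alpha/\alpha'>1$); then the gaps obey $n_{k+1}-n_k\gtrsim k^{p-1}\to\infty$, while $\sum_k(\epsilon\, n_k^{-1/\alpha})^{\alpha'}\gtrsim\sum_k k^{-p\alpha'/\alpha}=\infty$ since $p\alpha'/\alpha<1$. Because $r\mapsto\mu(B(x_0,r))$ is nondecreasing, its total variation over $[\epsilon\, n_k^{-1/\alpha},\,2\epsilon\, n_k^{-1/\alpha}]$ is at most $1$; splitting this interval into $k^2$ equal subintervals, at least one of them has $\mu$-mass $\le k^{-2}$, and I let $r_k$ be its midpoint and $\epsilon_k$ half its length. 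Then $[r_k-\epsilon_k,\,r_k+\epsilon_k]$ lies in the interval, $\epsilon_k\ge\tfrac12\,\epsilon\, n_k^{-1/\alpha}k^{-2}$, and
\[
\mu\bigl(\{x:\ r_k-\epsilon_k<d(x,x_0)<r_k+\epsilon_k\}\bigr)\le k^{-2}=:\eta_k .
\]
With $B_k=B(x_0,r_k)$ one has, for $k$ large, $\mu(B_k)\ge r_k^{\alpha'}\ge(\epsilon\, n_k^{-1/\alpha})^{\alpha'}$, so $\sum_k\mu(B_k)=\infty$, whereas $\sum_k\eta_k<\infty$.

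\textbf{Step 3 (variance estimate and conclusion).} Put $X_k=1_{B_k}\circ T^{n_k}$ and $S_n=\sum_{k=1}^n X_k$; then $X_k\ge 0$, $\e X_k=\mu(B_k)\le 1$ and $\e S_n=\sum_{k\le n}\mu(B_k)\to\infty$. I would estimate $\mathrm{var}(S_n)$ by sandwiching each $1_{B_k}$ between Lipschitz functions $\underline\varphi_k\le 1_{B_k}\le\overline\varphi_k$ that coincide with $1_{B_k}$ off the annulus $\{r_k-\epsilon_k<d(\cdot,x_0)<r_k+\epsilon_k\}$, have Lipschitz norm at most $L_k:=\epsilon_k^{-1}$ (which grows only polynomially in $k$) and satisfy $\int(\overline\varphi_k-\underline\varphi_k)\,d\mu\le\eta_k$. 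Using the $T$-invariance of $\mu$ and the decay estimate (\ref{cor}) applied to these Lipschitz functions, one obtains for $i\ne j$
\[
|\mathrm{Cov}(X_i,X_j)|\le\eta_i\mu(B_j)+\mu(B_i)\eta_j+\eta_i\eta_j+c(|n_i-n_j|)\,L_iL_j ,
\]
and therefore $\mathrm{var}(S_n)\le\bigl(1+4\sum_k\eta_k\bigr)\e S_n+\bigl(\sum_k\eta_k\bigr)^2+2\sum_{i<j}c(|n_i-n_j|)\,L_iL_j$. The last sum is finite, because $n_{k+1}-n_k$ and $L_k$ grow only polynomially in $k$ whereas superpolynomial decay of correlations keeps $c(m)$ below any prescribed negative power of $m$. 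Hence $\mathrm{var}(S_n)=O(\e S_n)$, which, since $\e S_n\to\infty$, is $o\bigl((\e S_n)^2/((\log\e S_n)(\log\log\e S_n)^2)\bigr)$, so hypothesis (\ref{rest}) holds with $\gamma=2$. Theorem~\ref{cheb} then yields $S_n/\e S_n\to 1$ a.s., hence $S_n\to\infty$ a.s., i.e.\ $T^{n_k}x\in B_k$ for infinitely many $k$, for $\mu$-a.e.\ $x$. By Step 1 this proves $\liminf_n n^{1/\alpha}d(T^nx,x_0)=0$ for $\mu$-a.e.\ $x$.

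\textbf{Main obstacle.} The delicate point is the Lipschitz approximation of $1_{B_k}$ in the absence of condition (A): a priori the transition annulus may carry a large share of the mass of $B_k$. This is exactly what forces the two devices above --- the pigeonhole choice of $r_k$, which makes $\sum_k\eta_k$ converge and uses the freedom (available because only a $\liminf$ is needed) to place $r_k$ anywhere in $[\epsilon\, n_k^{-1/\alpha},\,2\epsilon\, n_k^{-1/\alpha}]$; and the two-sided sandwich, which confines the annulus error to terms summing to $O(\e S_n)$ rather than $O(n\,\e S_n)$, so that Theorem~\ref{cheb} still applies. The remaining work --- the covariance bound and the convergence of $\sum_{i<j}c(|n_i-n_j|)\,L_iL_j$ --- is routine once $p$ is fixed and the decay is superpolynomial.
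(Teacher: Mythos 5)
This statement is not proved in the paper at all: it is quoted, with attribution, from Galatolo \cite{9}, and the paper's own contribution in this direction is Theorem \ref{quant}, which gives quantitative refinements under the extra regularity hypothesis (A) at $x_0$ (plus $\mu(\{x_0\})=0$). So there is no in-paper proof to compare against; judged on its own, your argument is correct, and it follows the same general strategy as the proofs of Theorems \ref{teorprinc} and \ref{teorsec} (Lipschitz mollification of ball indicators, covariance control via decay of correlations, then Theorem \ref{cheb}), but with one genuinely different and essential ingredient: the pigeonhole selection of $r_k$ inside $[\epsilon n_k^{-1/\alpha},2\epsilon n_k^{-1/\alpha}]$, which forces the transition annuli to have summable mass $\eta_k=k^{-2}$ with only polynomially large Lipschitz norms $L_k=O(k^{2+p/\alpha})$. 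The paper instead derives such annulus bounds from condition (A) with a fixed shrinking schedule $(n(\log n)^{1+\theta})^{-1/\delta}$; your device is exactly what is needed here because Galatolo's statement assumes nothing about $\mu$ beyond $\alpha>\bar d_\mu(x_0)$, and it is available precisely because only a $\liminf$ along a subsequence is claimed, leaving you free to place $r_k$ anywhere in a dyadic window. The quantitative details check out: $\mu(B_k)\ge(\epsilon n_k^{-1/\alpha})^{\alpha'}$ with $p\alpha'/\alpha<1$ gives $\e S_n\to\infty$; the two-sided sandwich yields the stated covariance bound after using $T$-invariance to reduce to (\ref{cor}) at lag $n_j-n_i$; and $\sum_{i<j}c(n_j-n_i)L_iL_j<\infty$ because the gaps satisfy $n_j-n_i\ge c\max\{j-i,\,j^{p-1}\}$ while $c(m)$ decays faster than any power. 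Hence $\mathrm{var}(S_n)=O(\e S_n)$, far stronger than (\ref{rest}), and Theorem \ref{cheb} applies. In summary: your route buys a self-contained derivation of Galatolo's theorem from Theorem \ref{cheb} with no regularity assumption on $\mu$; the paper's route (Theorem \ref{quant}) buys, at the price of (A), the actual asymptotics of the hitting-count rather than just $\liminf=0$.
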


The dynamical Borel-Cantelli lemmas (for nested balls) stated in previous section give, under additional assumptions, quantitative versions of these results. Let $\underline{\Theta}^\alpha_\mu(x_0)$ be the  \emph{lower $\alpha$-density of $\mu$} at $x_0$ defined by
\begin{equation*}\label{density}
      \underline{\Theta}^\alpha_\mu(x_0)=\liminf_{r\to 0} \frac{\mu(B(x_0,r))}{r^\alpha}.
\end{equation*}
So  $0\le\underline{\Theta}^\alpha_\mu(x_0)\le\infty$. 

\begin{theorem}\label{quant}\text{ }
\begin{enumerate}
\item[(a)]
If $(T, X, \mu, d)$ has polynomial decay of correlations with rate $\vartheta>0$, then, for every $x_0\in X$ satisfying (A) with 
$\delta(x_0)>2/\vartheta$ and $\mu(\{x_0\})=0$,  $\beta=\frac{\vartheta -2/\delta(x_0)}{\vartheta+1}$, $\gamma>1+\frac{(2-\delta(x_0))(1-\beta)}{2+\delta(x_0)}$, there exists $\kappa(n)$ with 
\begin{equation}\label{kapa}
      \limsup_{n\to\infty}\frac{\log \kappa(n)}{\log n}\le0
\end{equation}
such that, with $\alpha=\bar{d}_\mu(x_0)$,
\begin{equation}\label{quantpol}
    \lim_{N\to\infty} \frac{ \# \left\{1\le n \le N : \; n^{\frac{\beta}{\alpha}}\, d(T^n(x),x_0)< \kappa(n) \right\} } { N^{1-\beta} (\log N)^\gamma}= \theta(1-\beta)^{-1}
\end{equation}
($\theta=1$) for $\mu$ a.e.  $x \in X$.

If, moreover, $\underline{\Theta}^\alpha_\mu(x_0)>0$, then (\ref{quantpol}) holds with $\kappa(n)=(\log  i)^{\gamma/ \alpha}$ and 
$\theta=\underline{\Theta}^\alpha_\mu(x_0)$.

In particular, if  $(T, X, \mu, d)$ has superpolynomial decay of correlations, then, for every $x_0\in X$ satisfying (A) and $\mu(\{x_0\})=0$, $\beta<1$,   $\gamma>1+\frac{(2-\delta(x_0))(1-\beta)}{2+\delta(x_0)}$, (\ref{quantpol}) holds.
\item[(b)]
If $(T, X, \mu, d)$ has $\beta$-exponential decay of correlations, then, for every $x_0\in X$ satisfying (A) and $\mu(\{x_0\})=0$, $\gamma>1$, there exists $\kappa(n)$ 
satisfying (\ref{kapa}) such that, with $\alpha=\bar{d}_\mu(x_0)$,
\begin{equation}\label{quantexp}
    \lim_{N\to\infty} \frac{\# \left\{1\le n \le N : \; n^{\frac{1}{\alpha}}  \, d(T^n(x),x_0)< \kappa(n) \right\}}
    {(\log N)^{\beta^{-1}} (\log \log N)^\gamma} =\theta\beta
\end{equation}
($\theta=1$) for $\mu$ a.e.  $x \in X$.

If, moreover, $\underline{\Theta}^\alpha_\mu(x_0)>0$, then (\ref{quantexp}) holds with 
\[
   \kappa(n)=(\log i)^{(\beta^{-1}-1)/ \alpha} (\log \log i)^{\gamma/ \alpha}
\] 
and $\theta=\underline{\Theta}^\alpha_\mu(x_0)$.
\end{enumerate}
\end{theorem}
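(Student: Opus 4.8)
The plan is to deduce Theorem \ref{quant} from the dynamical Borel--Cantelli lemmas of the previous section, applied to the nested balls $B_n=B(x_0,r_n)$ for a carefully chosen sequence of radii $r_n\downarrow 0$. The first observation is that with $r_n:=\kappa(n)\,n^{-\beta/\alpha}$ (case (a)), resp.\ $r_n:=\kappa(n)\,n^{-1/\alpha}$ (case (b)), the set appearing in (\ref{quantpol}), resp.\ (\ref{quantexp}), is exactly $\{\,1\le n\le N:\;T^nx\in B(x_0,r_n)\,\}$, so its cardinality equals $\sum_{n=1}^{N}1_{B(x_0,r_n)}(T^nx)$. Hence, once the $r_n$ are chosen so that $(B(x_0,r_n))$ verifies the hypotheses of the relevant statement --- Corollary \ref{corol1} in the polynomial case, Theorem \ref{teorsec} in the $\beta$-exponential case (both of which require condition (A) only at $x_0$, which holds by assumption) --- the conclusion (DBC) gives $\sum_{n=1}^{N}1_{B(x_0,r_n)}(T^nx)\sim\sum_{n=1}^{N}\mu(B(x_0,r_n))$ for $\mu$-a.e.\ $x$, and the theorem follows by computing the partial sums $\sum_{n=1}^{N}\mu(B(x_0,r_n))$.

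For case (a) I would take $r_n$ to be the radius with $\mu(B(x_0,r_n))=n^{-\beta}(\log n)^{\gamma}$: such $r_n$ exists, is eventually decreasing, and tends to $0$, because $r\mapsto\mu(B(x_0,r))$ is continuous (this is condition (A) at $x_0$) and tends to $\mu(\{x_0\})=0$ as $r\to0$. With the substitution $\beta=\frac{\vartheta-2/\delta(x_0)}{\vartheta+1}$ one checks $\frac{2/\delta(x_0)+\beta}{1-\beta}=\vartheta$ and $0<\beta<1$, so Corollary \ref{corol1} applies and yields (DBC); since $\sum_{n=1}^{N}n^{-\beta}(\log n)^{\gamma}\sim(1-\beta)^{-1}N^{1-\beta}(\log N)^{\gamma}$, this is (\ref{quantpol}) with $\theta=1$. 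It remains to verify (\ref{kapa}) for $\kappa(n):=r_nn^{\beta/\alpha}$: from $\log\mu(B(x_0,r_n))=-\beta\log n+O(\log\log n)$ together with $\alpha=\bar d_\mu(x_0)$ --- which gives $\log\mu(B(x_0,r))>(\alpha+\epsilon)\log r$ for every $\epsilon>0$ and all small $r$ --- one obtains $\limsup_n\frac{\log r_n}{\log n}\le-\beta/\alpha$, hence $\limsup_n\frac{\log\kappa(n)}{\log n}\le0$. For the ``moreover'' part I would instead fix $r_n:=n^{-\beta/\alpha}(\log n)^{\gamma/\alpha}$, so that $\kappa(n)=(\log n)^{\gamma/\alpha}$ trivially satisfies (\ref{kapa}) and $r_n^{\alpha}=n^{-\beta}(\log n)^{\gamma}$; since $\liminf_{r\to0}\mu(B(x_0,r))/r^{\alpha}=\underline{\Theta}^\alpha_\mu(x_0)=\theta>0$ the sequence $(B(x_0,r_n))$ still satisfies the hypothesis of Corollary \ref{corol1} (for any admissible exponent $<\gamma$), and a Toeplitz averaging argument gives $\liminf_N\bigl(\sum_{n=1}^N\mu(B(x_0,r_n))\bigr)\bigl/\bigl((1-\beta)^{-1}N^{1-\beta}(\log N)^{\gamma}\bigr)\ge\theta$, which yields (\ref{quantpol}).

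Case (b) is entirely parallel. Here I would take $r_n$ with $\mu(B(x_0,r_n))=n^{-1}(\log n)^{\beta^{-1}-1}(\log\log n)^{\gamma}$, so that $\sum_{n=1}^{N}\mu(B(x_0,r_n))\sim\int_e^{N}t^{-1}(\log t)^{\beta^{-1}-1}(\log\log t)^{\gamma}\,dt\sim\beta(\log N)^{\beta^{-1}}(\log\log N)^{\gamma}$, and apply Theorem \ref{teorsec} with its exponent equal to $\beta^{-1}$ (matching the $\beta$-exponential decay hypothesis of (b)): its lower bound $\sum_{i}\mu(B_i)\ge(\log n)^{\beta^{-1}}(\log\log n)(\log\log\log n)^{\gamma'}$ for some $\gamma'\in(1,\gamma)$ holds because $(\log\log N)^{\gamma}$ dominates $(\log\log N)(\log\log\log N)^{\gamma'}$ when $\gamma>1$. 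Then (DBC) and the partial-sum asymptotic give (\ref{quantexp}) with $\theta=1$, and $\kappa(n):=r_nn^{1/\alpha}$ satisfies (\ref{kapa}) exactly as in (a). The ``moreover'' case fixes $r_n:=n^{-1/\alpha}(\log n)^{(\beta^{-1}-1)/\alpha}(\log\log n)^{\gamma/\alpha}$, uses $\underline{\Theta}^\alpha_\mu(x_0)>0$, and concludes by the same Toeplitz argument.

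The step I expect to be the main obstacle is the simultaneous fine-tuning in the choice of $r_n$: one must arrange that $\sum_{n=1}^{N}\mu(B(x_0,r_n))$ has exactly the stated leading term \emph{and} constant, while still meeting the precise iterated-logarithm thresholds in the hypotheses of Theorem \ref{teorprinc}/Theorem \ref{teorsec} and keeping $\kappa$ subpolynomial. A related delicate point is that in the ``moreover'' cases only a $\liminf$ on $\mu(B(x_0,r_n))/r_n^{\alpha}$ is available, which directly yields the lower bounds $\liminf_N(\cdots)\ge\theta(1-\beta)^{-1}$, resp.\ $\ge\theta\beta$; obtaining the exact limit in (\ref{quantpol})--(\ref{quantexp}) in that situation rests on the lower $\alpha$-density being the genuine $\alpha$-density, and I would phrase the statement accordingly.
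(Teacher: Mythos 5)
Your proposal matches the paper's proof essentially step for step: the paper likewise chooses $r_i$ with $\mu(B(x_0,r_i))=i^{-\beta}(\log i)^{\gamma}$ (resp.\ the iterated-logarithm analogue in (b)), sets $\kappa(i)=\max\{1,\,C(r_i)^{-1/\alpha}(\log i)^{\gamma/\alpha}\}=\max\{1,\,r_i\,i^{\beta/\alpha}\}$, verifies (\ref{kapa}) from $\limsup_{r\to0}\log C(r)/\log r=0$ together with condition (A), and then invokes Corollary \ref{corol1} (resp.\ Corollary \ref{cor2}) and the partial-sum asymptotics. Your caveat about the ``moreover'' case is well taken: at that point the paper asserts the two-sided bound $\underline{\Theta}^\alpha_\mu(x_0)-\epsilon\le\mu(B(x_0,r))/r^{\alpha}\le\underline{\Theta}^\alpha_\mu(x_0)+\epsilon$ for all small $r$, which only follows when the lower $\alpha$-density is a genuine limit --- precisely the gap you identify.
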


\section{Proofs}
\subsection{Proofs of 1.1} 

\begin{proof}[Proof of Theorem \ref{cheb}]
Given $\sigma>0$, Chebyshev's inequality implies
\[
      \p(|S_n - {\e}S_n|>\sigma{\e}S_n)\le \mathrm{var}(S_n)/(\sigma {\e}S_n)^2
\]
so
\begin{equation}\label{lemma}
\p(|S_n-{\e}S_n|>\sigma{\e}S_n)\le C \sigma^{-2} (\log {\e}S_n)^{-1}(\log\log {\e}S_n)^{-\gamma}
\end{equation}
for some constant $C>0$.
Note that (\ref{lemma}) implies  $S_n/{\e}S_n\to 1$ in probability. To get a.s. convergence we have to take subsequences. Let $0<\theta<\gamma-1$ and $n_k=\inf\{n : {\e}S_n\ge e^{k/(\log k)^{\theta}} \}$. Let $U_k=S_{n_k}$ and note that the definition and ${\e}X_i\le M$ imply $e^{k/(\log k)^{\theta}} \le {\e}U_k< e^{k/{\log k}^{\theta}}+M$. Replacing $n$ by $n_k$ in (\ref{lemma}) we get
\begin{equation*}
      \p(|U_k - {\e}U_k|>\sigma{\e}U_k)\le \tilde{C} \sigma^{-2} k^{-1}(\log k)^{\theta-\gamma}
\end{equation*}
for some constant $\tilde{C}>0$.
So $\sum_{k=1}^\infty  \p(|U_k - {\e}U_k|>\sigma{\e}U_k)<\infty$, and the Borel-Cantelli lemma (BC1) implies  $\p(|U_k - {\e}U_k|>\sigma{\e}U_k\,\text{i.o.})=0$. Since $\sigma$ is arbitrary, it follows that $U_k/{\e}U_k\to 1$ a.s. To get $S_n/{\e}S_n\to 1$ a.s., pick an $\omega$ so that $U_k(\omega)/{\e}U_k\to 1$ and observe that if $n_k\le n<n_{k+1}$ then
\[
      \frac{U_k(\omega)}{{\e}U_{k+1}}\le  \frac{S_n(\omega)}{{\e}S_n}\le  \frac{U_{k+1}(\omega)}{{\e}U_k}.
\]
To show that the terms at the left and right end converge to 1, we rewrite the last inequalities as
\[
  \frac{{\e}U_{k}}{{\e}U_{k+1}}    \frac{U_k(\omega)}{{\e}U_{k}}\le  \frac{S_n(\omega)}{{\e}S_n}\le  \frac{U_{k+1}(\omega)}{{\e}U_{k+1}} \frac{{\e}U_{k+1}}{{\e}U_k}.
\]
From this we see it is enough to show  ${\e}U_{k+1}/{\e}U_k\to 1$. Since
\[
       e^{k/(\log k)^{\theta}} \le {\e}U_k\le  {\e}U_{k+1}\le  e^{(k+1)/(\log (k+1))^{\theta}}+M
\]
we must show $e^{(k+1)/(\log (k+1))^{\theta}}/ e^{k/(\log k)^{\theta}}$ converges to 1. 

We note that  $e^{k/(\log k)^{\theta}}=k^{h(k)}$
where $h\colon (1, \infty) \to (0,\infty),\, h(x)=x/(\log x)^{1+\theta}$. Then we must show
\[
    \frac{(k+1)^{h(k+1)}}{k^{h(k)}}=\left(1+\frac{1}{k}\right)^{h(k)} e^{(h(k+1)-h(k)) \log (k+1)}
\]
converges to 1. Clearly $h(k)=o(k)$ and so the left hand side of product above converges to $e^{0}=1$. We note that 
$0<h'(x)<(\log x)^{-1-\theta}$ and $h''(x)<0$ (for all sufficiently large $x$), so the  right hand side of product above is 
$\le \exp(\log(k+1)/(\log k)^{1+\theta})$ which also converges to 1. 
\end{proof}

\subsection{Proofs of 1.2}
\begin{proof}[Proof of Theorem \ref{return1}]\text{ }\\
\emph{The simple random walk.}
We treat this particular case separately because its proof is elementary. 
Then we say how it extends easily to lattice random walks by using a local central limit theorem.

We will use Stirling's formula
\[  
        n!=\sqrt{2\pi n} \left(\frac{n}{e}\right)^n \left(1+O\left(\frac{1}{n}\right)\right)
\]
where $O(n^{-1})>0$.
In this particular case let $n_i$ be a sequence of \emph{even} positive integers and let $E_i$ denote the event $S_{n_i}=a\sqrt{n_i}$ whose precise meaning is $S_{n_i}=2[a\sqrt{n_i}/2]$ (this eliminates probability zero of $E_i$). 

\emph{(a)} We have
\[
    \p(E_i)=\binom{n_i}{\left(n_i+2[a\sqrt{n_i}/2]\right)/2} 2^{-n_i}\sim  \sqrt{\frac{2}{\pi}} \,n_i^{-1/2} \, e^{-a^2/2}
\]     
by Stirling's formula, and the result follows from (BC1).

\emph{(b)} First of all, we notice the first condition on the sequence $n_i$ implies
$(\sqrt{n_j}-\sqrt{n_i})^{-1}\le \max\{1,4A^{-1}\}$ for every $j>i$. This will be important because when we do approximation to the lattice  sometimes we will have to use $a \pm 2(\sqrt{n_j}-\sqrt{n_i})^{-1}$ instead of $a$. Let $\tilde{a}=|a|+2 \max\{1,4A^{-1}\}$.

Also, the first condition on the sequence $n_i$ implies there exists $A_1>0$ such that $n_i\ge A_1 i^2$ for every $i$, and so
\begin{equation}\label{2condseq}
     A_2  (\log n)^{1/2} (\log \log n)^{3/2} (\log \log \log n)^{\gamma}  \le\sum_{i=1}^n n_i^{-1/2}\le A_2^{-1} \log n
\end{equation}
for some constant $A_2>0$.

We want to apply Theorem \ref{cheb} to the random variables $\tilde{X}_i=1_{E_i}$. Let $\tilde{S}_n=\sum_{i=i}^n \tilde{X}_i$. We have ${\e}\tilde{S}_n\sim(2/\pi)^{1/2}  e^{-a^2/2}\sum_{i=1}^n n_i^{-1/2} \to \infty$. To verify condition (\ref{rest}) in Theorem \ref{slln} we have $\sum_{i=1}^n \mathrm{var}( \tilde{X}_i)\le{\e}\tilde{S}_n$ and, for $i<j$,
\begin{align*}
     \p(E_i\cap E_j)&=\p(S_{n_i}=a\sqrt{n_i})\p(S_{n_j-n_i}=2[a\sqrt{n_j}/2]-2[a\sqrt{n_i}/2])\\
     &= \p(S_{n_i}=a\sqrt{n_i})\p(S_{n_j}=a\sqrt{n_j}) \left(\frac{n_j}{n_j-n_i}\right)^{1/2} R\\
     &=\p(E_i)\p(E_j) \left(\frac{n_j}{n_j-n_i}\right)^{1/2} R
\end{align*}
where, for $n_j\ge 2\tilde{a}^6$,
\[
    R= e^{\frac{a^2\sqrt{n_i}}{\sqrt{n_j}+\sqrt{n_i}}} \left(1+O\left(\frac{1}{n_j-n_i}+\frac{\tilde{a}^3}{\sqrt{n_j}}\right)\right)
\]
(for $a=0$ this holds with $\tilde{a}=0$)
is obtained using Stirling's formula and 
\[ 
      (1+k/m)^m\le e^k \le (1+k/m)^m (1+k^2/m),
\]
\[
     (1+k/m)^m \le (1-k/m)^{-m}\le (1+k/m)^m  (1-k^2/m)^{-1},
\]
where $k\ge0,\,m> k^2$ and, for the second line of inequalities, we also assume $m\ge 1,\,m> k$.
To estimate
\begin{equation}\label{erdos11}
\Bigl|\sum_{1\le i<j\le n}  \left(\p(E_i\cap E_j)-\p(E_i)\p(E_j)\right)\Bigr|
\end{equation}
we separate the sum in two cases. Let $\sqrt{\nu}_n=(\log {\e}\tilde{S}_n) (\log \log {\e}\tilde{S}_n)^{\gamma}$. In all cases we restrict to $n_i\ge 2\tilde{a}^6$ and $\nu_n\ge a^4$.
Here $C_1, C_2,...$ denote appropriate absolute constants (which do not depend on $a$).

\emph{Case 1}: $n_j>\nu_n n_i$\\
Then we see that 
\[  
    \left(\frac{n_j}{n_j-n_i}\right)^{1/2}\le 1+\frac{1}{\nu_n} \;\text{ and }\; R= 1+O\left(\frac{a^2}{\sqrt{\nu_n}}+\frac{1}{n_j-n_i}+\frac{\tilde{a}^3}{\sqrt{n_j}}\right).
\]
Since $\sum_{i=1}^\infty \p(E_i)/\sqrt{n_i}\le C_1<\infty$, this implies (\ref{erdos11}) is less than some constant $C_2$ times
\[
 \frac{({\e}\tilde{S}_n)^2}{\nu_n} + a^2\frac{({\e}\tilde{S}_n)^2}{\sqrt{\nu_n}} + \frac{{\e}\tilde{S}_n}{\nu_n} + 
 \tilde{a}^3 \frac{{\e}\tilde{S}_n}{\sqrt{\nu_n}}.
\]

\emph{Case 2}: $n_j\le\nu_n n_i$\\
Clearly (\ref{erdos11}) is less than
\begin{align}
    &C_3 e^{a^2/2} \sum_{i,j}\p(E_i)\p(E_j) \left(\frac{n_j}{n_j-n_i}\right)^{1/2} \notag \\
    &\le C_4 \sum_{i=1}^n \p(E_i) \sum_j (n_j-n_i)^{-1/2}\label{erdos2}
\end{align}
Given $i$, let $N$ be the number of $j$'s satisfying $n_i\le n_j \le \nu_n n_i$. Then $n_{i+N}\le \nu_n n_i$ and $N+i\le C_5\nu_n i (\log i)^{1/2}$. The first condition on the sequence $n_i$ implies $n_j-n_i\ge C_6 (j^2-i^2)$ for all $i<j$, so applying Cauchy-Schwarz inequality we get  
\begin{align*}
    \sum_{j=i+1}^N (n_j-n_i)^{-1/2}\le C_7 \left(\sum_{j=i+1}^{N+i} (j+i)^{-1} \right)^{1/2} 
    \left(\sum_{j=i+1}^{N+i} (j-i)^{-1}\right)^{1/2} 
\end{align*}
which is less than $C_8 (\log\log n)^{1/2} (\log n)^{1/2}$. Then (\ref{erdos2}) is less than
\[
C_9 (\log \log n)^{1/2} (\log n)^{1/2} {\e}\tilde{S}_n=O\left(({\e}\tilde{S}_n)^2/\sqrt{\nu_n}\right), 
\]
where we have used (\ref{2condseq}).

Applying Theorem \ref{cheb} we get
\[
\frac{\sum_{i=1}^n 1_{\{S_{n_i}=a\sqrt{n_i}\}}}{\sum_{i=1}^n n_i^{-1/2}} \to  \sqrt{\frac{2}{\pi}}\, e^{-a^2/2} \quad\text{a.s.}
\]
\text{ }\\
\emph{Lattice random walks.} 
Since ${\e}|X_i|^3<\infty$, we have the following local central limit theorem with rates (see \cite{8}):
\begin{equation}\label{lclt1}
   \p(S_n=a\sigma\sqrt{n})=\frac{h}{\sqrt{2\pi n}\,\sigma}\, e^{-a^2/2} \left(1+ O\left(\frac{1}{\sqrt{n}}\right)\right).
\end{equation}
Here, $C_1, C_2, ...$ denote appropriate absolute constants that might depend (continuously) on $a, \sigma, h$ and on the distribution of $X_i$ (this includes the constants in $O(\cdot)$). 
Let $E_i$ denote the event $S_{n_i}=a\sigma\sqrt{n_i}$, $\tilde{X}_i=1_{E_i}$ and $\tilde{S}_n=\sum_{i=i}^n \tilde{X}_i$. By (\ref{lclt1}) we have ${\e}\tilde{S}_n\sim h(\sqrt{2\pi}\sigma)^{-1}  e^{-a^2/2}\sum_{i=1}^n n_i^{-1/2} \to \infty$, $\sum_{i=1}^n \mathrm{var}( \tilde{X}_i)\le{\e}\tilde{S}_n$ and, for $i<j$,
\begin{align*}
     \p(E_i\cap E_j)&=\p(S_{n_i}=a\sigma\sqrt{n_i})\p(S_{n_j-n_i}=a\sigma(\sqrt{n_j}-\sqrt{n_i})+O(1))\\
     &= \p(S_{n_i}=a\sqrt{n_i})\p(S_{n_j}=a\sqrt{n_j}) \left(\frac{n_j}{n_j-n_i}\right)^{1/2} R\\
     &=\p(E_i)\p(E_j) \left(\frac{n_j}{n_j-n_i}\right)^{1/2} R
\end{align*}
where, for $n_j\ge C_1$,
\[
    R= e^{\frac{a^2\sqrt{n_i}}{\sqrt{n_j}+\sqrt{n_i}}} \left(1+O\left(\frac{1}{\sqrt{n_j-n_i}}\right)\right).
\]
Notice that we also used here the second condition on the sequence $n_i$ because we need $a+O(1)(\sqrt{n_j}-\sqrt{n_i})^{-1}$ to 
be uniformly bounded in order to apply (\ref{lclt1}).
Now the rest of the proof is similar to the simple random walk.

The uniform convergence in probability is an immediate consequence of (\ref{lemma}) and the fact that $C$ can be chosen uniform for $a\in [-N,N]$. If we use more restrict sequences $n_i$ then we can improve this uniform convergence. For example,
if, moreover, $n_i\le Ai^2(\log i)^{\alpha}$ for some $0\le\alpha<1$, then, for every $N>0$ and $\gamma>1$ there exists $C>0$ such that, for every $\epsilon>0$,
\[
\sup_{a\in [-N,N]} \p\left(|\Delta_n^a|>\epsilon\right)\le C \epsilon^{-2}(\log \log n)^{-\gamma}
\]  
(just use $\sqrt{\nu_n}=(\log {\e}\tilde{S}_n)^\gamma$).

\end{proof}

\begin{proof}[Proof of Theorem \ref{return3}]
Since ${\e}|X_i|^3<\infty$, we have the following local central limit theorem with rates (see \cite{8}):
\begin{equation}\label{lclt2}
   \p(S_n=a\sigma\sqrt{n})=\frac{1}{\sqrt{2\pi }}\, e^{-a^2/2} \left(1+ O\left(\frac{1}{\sqrt{n}}\right)\right).
\end{equation}
Here, $C_1, C_2, ...$ denote appropriate absolute constants that might depend (continuously) on $a$ and on the density of $X_i$ (this includes the constants in $O(\cdot)$). 
Let $E_i$ denote the event $S_{n_i}=a\sigma\sqrt{n_i}$ and consider the random variables $\tilde{X_i}=1_{E_i}$ and
$\tilde{S}_n=\sum_{i=i}^n \tilde{X}_i$. As before, we want to apply Theorem \ref{cheb} to the random variables $\tilde{X}_i$.
Clearly (\ref{lclt2}) implies ${\e}\tilde{S}_n\sim  n (\sqrt{2\pi})^{-1} e^{-a^2/2}$. Also $\sum_{i=1}^n \mathrm{var}( \tilde{X}_i)\le{\e}\tilde{S}_n$ and, for $i<j$,
\begin{align*}
     \p(E_i\cap E_j)&=\p(S_{n_i}=a\sigma\sqrt{n_i})\p(S_{n_j-n_i}=a\sigma(\sqrt{n_j}-\sqrt{n_i}))\\
     &= \p(S_{n_i}=a\sqrt{n_i})\p(S_{n_j}=a\sqrt{n_j}) R\\
     &=\p(E_i)\p(E_j) R
\end{align*}
where, for $n_j\ge C_1$,
\[
    R= e^{\frac{a^2\sqrt{n_i}}{\sqrt{n_j}+\sqrt{n_i}}} \left(1+O\left(\frac{1}{\sqrt{n_j-n_i}}\right)\right).
\]
To estimate
\begin{equation}\label{erdos11b}
\Bigl|\sum_{1\le i<j\le n}  \left(\p(E_i\cap E_j)-\p(E_i)\p(E_j)\right)\Bigr|
\end{equation}
we separate the sum in two cases. Let $\sqrt{\nu_n}=(\log {\e}\tilde{S}_n) (\log \log {\e}\tilde{S}_n)^{\alpha-1}$.

\emph{Case 1}: $n_j>\nu_n n_i$\\
Then $R=1+O(\nu_n^{-1/2})$ and (\ref{erdos11b}) is $O(({\e}\tilde{S}_n)^2/\sqrt{\nu_n})$.

\emph{Case 2}: $n_j\le\nu_n n_i$\\
Given $i$, let $N$ be the number of $j>i$ satisfying $n_j \le \nu_n n_i$. Then $n_{i+N}\le \nu_n n_i$ and, using the hypothesis on the sequence $n_i$, we get 
\[
   \sum_{k=i}^{N+i-1} (\log i) (\log \log i)^{\alpha}/i\le C_2 \log \nu_n.
\] 
Then, simple calculus shows that, for all sufficiently large $n$,
\begin{align*}
    N&\le \exp\left(\left( (\log n)^{2} +C_3\log \nu_n/(\log \log i)^{\alpha}\right)^{1/2}\right)-n \\
    &\le C_4 n \left((\log n) (\log\log n)^{\alpha-1}\right)^{-1}.
\end{align*}
Then (\ref{erdos11b}) is less than $C_5 n^{2} \left((\log n) (\log\log n)^{\alpha-1}\right)^{-1}=O\left( ({\e}\tilde{S}_n)^2/\sqrt{\nu_n} \right)$.

The conclusion follows from applying Theorem \ref{cheb}.

In the special case $a=0$,  we have $R=1+O\left((n_j-n_i)^{-1/2}\right)$, for $n_j\ge C_1$. Then in \emph{Case 2} (where the hypothesis on $n_i$ was used) we can use $n_i=i$ to get (\ref{erdos11b}) less than
\[
      C_6 \sum_{i=1}^n \sum_{j=i+1}^{\nu_n i} (j-i)^{-1/2}\le C_7 \sqrt{\nu_n} n^{3/2}<({\e}\tilde{S}_n)^2/\sqrt{\nu_n}.
\]

The uniform convergence in probability is an immediate consequence of (\ref{lemma}) and the fact that $C$ can be chosen uniform for $a\in [-N,N]$. If we use more restrict sequences $n_i$ then we can improve this uniform convergence. For example,
if $n_{i+1}/n_i\ge 1+A(\log i)^{\alpha}/i$ for some $\alpha>1$, then, for every $N>0$ and $1<\gamma<\alpha$ there exists $C>0$ such that, for every $\epsilon>0$,
\[
\sup_{a\in [-N,N]} \p\left(|\Delta_n^a|>\epsilon\right)\le C \epsilon^{-2}(\log n)^{-\gamma}
\]  
(just use $\sqrt{\nu_n}=(\log {\e}\tilde{S}_n)^\gamma$).
\end{proof}

\subsection{Proofs of 1.3}

\begin{proof}[Proof of Theorem \ref{teorprinc}]
We use some notation of Probability. Given two measurable functions $f,g\colon X \to \mathbb{R}$ we denote (whenever it makes sense)
\begin{align*}
 \mu(f)&=\int f\,d\mu,\quad \mathrm{var}(f)=\mu(f^2)-\mu(f)^2,\quad \mathrm{cov}(f,g)=\mu(fg)-\mu(f)\mu(g).
\end{align*}
Given $f_1,...,f_n\colon X \to \mathbb{R}$ measurable functions, if $\mu(f_i^2)<\infty$ then (see \cite{7})
\begin{equation}\label{cov}
  \mathrm{var}(f_1+\cdots f_n)=\mathrm{var}(f_1)+\cdots+\mathrm{var}(f_n) + 2\sum_{1\le i<j\le n}\mathrm{cov}(f_i,f_j).
\end{equation}

Fix $\theta>0$ such that $\gamma>1+(2(1+\theta)-\delta)\beta/(2+\delta)$. For each $n$ let $\tilde{f_n}$ be a Lipschitz function such that $\tilde{f_n}(x)=1$ if $x\in B_n$, $\tilde{f_n}(x)=0$ if $d(x,B_n)>(n(\log n)^{1+\theta})^{-1/\delta}$, $0\le \tilde{f_n}\le 1$ and $\|\tilde{f_n}\|_\mathrm{Lip}\le (n(\log n)^{1+\theta})^{1/\delta}$. Let $f_n=\tilde{f}_n\circ T^n$ and $S_n=\sum_{i=0}^{n-1} f_i$. Note that $\mu(S_n)=\sum_{i=0}^{n-1} \mu(B_i) + O(1)$ and for $\mu$ a.e. $x$, $f_n(x)=1_{B_n}(T^n x)$ except for finitely many $n$ by the Borel-Cantelli lemma (BC1), since $\mu(x : f_n(x)\ne 1_{B_n}(T^n x))=\mu(x : r_n<d(T^n x, p_n)<r_n+(n(\log n)^{1+\theta})^{-1/\delta})< (n(\log n)^{1+\theta})^{-1}$ by assumption (A). So it is enough to prove that $S_n/\mu(S_n)\to 1$ $\mu$ a.e., which we will do by using Theorem \ref{cheb}.

Since $0\le f_i\le 1$, we get that $\mathrm{var}(f_i)\le \mu(f_i^2)\le\mu(f_i)$ and (\ref{cov}) implies
\[
  \mathrm{var}(S_n)\le \mu(S_n)+2 \Biggl(\; \underbrace{\sum_{\substack{0\le i<j\le n-1 \\ j-i\le \nu(n)}} \mathrm{cov}(f_i,f_j)}_I +   \underbrace{\sum_{\substack{0\le i<j\le n-1\\ j-i>\nu(n)}} \mathrm{cov}(f_i,f_j)}_{II} \;\Biggr),
\]
where $\nu(n)=\mu(S_n)(\log n)^{-1} (\log \log n)^{-\rho}$, for some $\rho>1$.
We easily bound $I$ by
\[
   I\le \sum_{j=1}^{n-1} \sum_{i=j-[\nu(n)]}^{j-1} \mu(f_j)\le\mu(S_n) \nu(n).
\]
We use decay of correlations to bound $II$ by
\begin{align*}
   II&\le \sum_{\substack{0\le i<j\le n-1\\ j-i>\nu(n)}} c(j-i) \|\tilde{f_i}\|_\mathrm{Lip} \|\tilde{f_j}\|_\mathrm{Lip}\\
   &\le C (n (\log n)^{1+\theta})^{1/\delta} \sum_{i=0}^{n-2} (i (\log i)^{1+\theta})^{1/\delta} \sum_{j=i+[\nu(n)]+1}^{n-1} \frac{1}{(j-i)^\alpha}\\
  &\le \frac{2C}{\alpha-1} \frac{(n (\log n)^{1+\theta})^{2/\delta} n}{\nu(n)^{\alpha-1}}.
\end{align*}
Using the hypothesis on the growth of $\mu(S_n)$ and the definition of $\beta, \gamma$ and $\theta$, we get 
\begin{equation*}
      \mathrm{var}(S_n)= O(\mu(S_n)^2 (\log n)^{-1} (\log \log n)^{-\rho}).
\end{equation*}
Then we satisfy Theorem \ref{cheb}' hyphoteses and so $S_n/\mu(S_n)\to 1$ $\mu$ a.e.

By (\ref{lemma}), there exists $C_1>0$ such that, for every $\epsilon>0$, 
\[
\mu(| S_n/\mu(S_n)-1|>\epsilon)\le C_1 \epsilon^{-2} (\log n)^{-1}(\log\log n)^{-\rho}.
\]
Also by the proof of (BC1),
\[
    \mu(x : f_i(x)\ne 1_{B_i}(T^i x) \text{ for some } i\ge n)\le \sum_{i=n}^\infty \left(i(\log i)^{1+\theta}\right)^{-1}\le C_\theta (\log n)^{-\theta},
\]
for some $C_\theta>0$. Then, using  $\mu(S_n)=\sum_{i=0}^{n-1} \mu(B_i) + O(1)$ and simple inequalities we get
\[
    \mu(|\Delta_n-1|>\epsilon)\le C_2 (\log n)^{-1} (\log \log n)^{-\rho} + C_\theta (\log n)^{-\theta},
\]
where $C_2>0$ depends on $\rho,\theta,\epsilon$. As in the previous section, we can get better `large deviation' results if we increase the growth of $\mu(S_n)$.
\end{proof}

\begin{proof}[Proof of Theorem \ref{teorsec}]
Let $S_n$ be as in the proof of Theorem \ref{teorprinc} (with $\theta=1$). First we prove that
\begin{align}\label{foda}
   (\log n)^\beta \le \frac{2^{\gamma+2}\beta\mu(S_n)}{(\log \mu(S_n))(\log\log \mu(S_n))^\gamma}
\end{align}
for all sufficiently large $n$. Let $\mu(S_n)=\rho(n) (\log n)^\beta$. By hypothesis (and $\mu(S_n)=\sum_{i=0}^{n-1} \mu(B_i) + O(1)$) we get $\rho(n)\ge \frac{1}{2} (\log \log n) (\log\log\log n)^{\gamma}$. Since $x(\log x)^{-1} (\log\log x)^{-\gamma}$ is an increasing function, in order to prove (\ref{foda}) we may assume $\rho(n)= \frac{1}{2} (\log \log n) (\log \log \log n)^\gamma$. Then
\[
   \log \mu(S_n)\le 2\beta \log \log n,\quad (\log \log \mu(S_n))^\gamma \le 2^\gamma (\log\log\log n)^\gamma
\]
which implies (\ref{foda}). 

Now we follow the proof of Theorem \ref{teorprinc} but with $\nu(n)=A(\log n)^{\beta}$, where $\alpha A^{\beta^{-1}}>2/\delta+1$. Then we get
\[
   I\le \mu(S_n) \nu(n)\le \frac{2^{\gamma+2}\beta A \mu(S_n)^2}{(\log \mu(S_n))(\log\log \mu(S_n))^\gamma}
\]
where we have used (\ref{foda}). Also
\[
   II \le C (n (\log n)^{2})^{1/\delta} \sum_{i=0}^{n-2} (i (\log i)^{2})^{1/\delta} \sum_{j=i+[\nu(n)]+1}^{n-1} 
   e^{-\alpha (j-i)^{\beta^{-1}}}
\]
and  $\sum_{k=N}^\infty e^{-\alpha k^{\beta^{-1}}}\le \tilde{C} e^{-\alpha N^{\beta^{-1}}} N^{[\beta]/\beta}$ for some $\tilde{C}>0$, so
\[
II=O\left((n (\log n)^{2})^{2/\delta} n  (\log n)^{\beta} e^{-\alpha \nu(n)^{\beta^{-1}}}\right).
\]
Since $e^{-\alpha \nu(n)^{\beta^{-1}}}=n^{-\alpha A^{\beta^{-1}} }$, the definition of $A$ implies $II\to 0$. Then
\[ 
    \mathrm{var}(S_n)= O\left( \frac{\mu(S_n)^2}{(\log \mu(S_n))(\log\log \mu(S_n))^\gamma}   \right)
\]
and we can apply Theorem \ref{cheb} to get $S_n/\mu(S_n)\to 1$ $\mu$ a.e.

The proof of the `large deviation' result is similar to the one given in the Proof of Theorem \ref{teorprinc}.
\end{proof}

\subsection{Proofs of 1.4}
\begin{proof}[Proof of Theorem \ref{quant}]\text{ }
\begin{enumerate}
\item[(a)]
Note that hypotheses imply $\alpha=\bar{d}_\mu(x_0)>0$.
If we define $C(r)=\mu(B(x_0,r)/r^\alpha$, then $\limsup_{r\to0} \log C(r) / \log r=0$. By hypotheses , there are $r_i\to0$ such that
\begin{equation}\label{holder}
   C(r_i) r_i^\alpha= \mu(B(x_0,r_i)=i^{-\beta}(\log i)^{\gamma}\le C r_i^{\delta(x_0)}.
\end{equation}
Also $r_i^{-1}\ge i^{\frac{\beta}{\alpha}}/ \kappa(i)$ where
\[
 \kappa(i)= \max \left\{1,\, C(r_i)^{-1/\alpha} (\log i)^{\gamma/ \alpha} \right\}.
\]
By (\ref{holder}) we get $\log r_i^{-1}/ \log i \le \beta \delta(x_0)^{-1} +o(1)$ and so $\log \kappa(i)/ \log i \to 0$.
The first part of the result follows by applying Corollary \ref{corol1}.

Now assume $\underline{\Theta}^\alpha_\mu(x_0)>0$. Also assume $\underline{\Theta}^\alpha_\mu(x_0)<\infty$ (the other case is similar). Then, given $0<\epsilon<\underline{\Theta}^\alpha_\mu(x_0)$, there exists $r_0>0$ such that for every $0<r<r_0$ we have $(\underline{\Theta}^\alpha_\mu(x_0)-\epsilon)\le\mu(B(x_0,r)/ r^{\alpha} \le (\underline{\Theta}^\alpha_\mu(x_0)+\epsilon)$. Then we set $r_i^\alpha=i^{-\beta}(\log i)^{\gamma}$ and apply
Corollary \ref{corol1}.

\item[(b)]
The proof is similar to the proof of (a) with the obvious modifications and using Corollary \ref{cor2}.
\end{enumerate}
\end{proof}

\textbf{Acknowledgments:} The author was partially supported by Funda\c{c}\~ao para a Ci\^encia e a
Tecnologia through the project "Randomness in Deterministic Dynamical
Systems and Applications" (PTDC/MAT/105448/2008), and by PRONEX-Dynamical Systems,  CNPQ/FAPERJ.

\end{document}